\def\zz{{\bf Z}}
\def\ff{{\bf F}}
\def\qq{{\bf Q}}
\def\rr{{\bf R}}
\def\co{\colon\thinspace}
\def\calc{\mathcal{C}}
\def\calg{\mathcal{G}}
\def\calg{\mathcal{G}}
\def\calf{\mathcal{F}}
\def\co{\colon}
\newtheorem{theorem}{Theorem}[section]
\newtheorem{lemma}[theorem]{Lemma}
\newtheorem{corollary}[theorem]{Corollary}
\theoremstyle{definition}
\newtheorem{definition}[theorem]{Definition}
\theoremstyle{definition}
\newtheorem{example}[theorem]{Example}
\theoremstyle{definition}
\newtheorem{remark}[theorem]{Remark}
\numberwithin{equation}{section}
\begin{document}
\title{The  Concordance Genus of a Knot, II}
\author{Charles Livingston}
\thanks{This work was supported by a grant  from the NSF}
\address{Department of Mathematics, Indiana University, Bloomington, IN 47405}
\email{livingst@indiana.edu}
\keywords{}

\subjclass{57M25}

\maketitle


 Two basic notions of genus for knots $K \subset S^3$ are the {\it 3--genus}, $g_3(K)$, the minimum genus of an embedded surface bounded  by $K$ in $S^3$, and the {\it 4--genus}, $g_4(K)$, the minimum genus of an embedded surface bounded by $K$ in $B^4$.  A third notion is the {\it concordance genus}, $g_c(K)$, the  minimum value of $g_3(J)$ among all knots $J$ concordant to $K$.  Each of these can be defined in either the  smooth or the  topological, locally flat, category; our results apply in both.
An elementary exercise shows:
 
 \vskip.1in
 \noindent{\bf Proposition 1.} {\it  For all knots $K$, $g_3(K) \ge g_c(K) \ge g_4(K)$.  If $g_4(K) = 0$ then $g_c(K) = 0$.}
 
 \vskip.1in

There exist knots for which the gap between $g_3(K)$ and $g_c(K)$ is arbitrarily large; forming the connected sum with a slice knot does not change the value of $g_c$ but raises the values of $g_3$.  Gordon~\cite{go1} asked whether $g_c(K) = g_4(K)$ for all knots $K$.  
In unpublished work, Casson showed that for the knot $K = 6_2$, $g_c(K) = 2$ and $g_4(K) = 1$.  Nakanishi~\cite{na} proved that $g_c(K) - g_4(K)$ can be arbitrarily large.  (We use the classical names for knots, such as $6_2$, as listed in~\cite{lc, ro}.)

The article~\cite{liv1} initiated a detailed examination of $g_c$,   illustrating the use of  algebraic concordance invariants to determine the concordance genus of knots, and also demonstrating the application of Casson-Gordon invariants when algebraic invariants do not suffice.  In that article  the concordance genus was determined for all prime knots of nine or fewer crossings, excluding $8_{18}$ and $9_{40}$.  At 10 crossings the only example that does not fall to the techniques of~\cite{liv1} is $10_{82}$.   Here we delve deeper into the structure of the algebraic concordance group to prove:

\vskip.1in
 \noindent {\bf Proposition 2.} {\it For $K = 8_{18}$ and $K = 9_{40}$,  $g_3(K) = g_c(K) =3$ and $g_4(K) = 1$.}

\vskip.1in

The knot $10_{82}$ is much more interesting.  Applying Levine's classification of the algebraic concordance group, additional results about the integral algebraic concordance group, Casson-Gordon invariants, and recent work on twisted Alexander polynomials, we have:\vskip.1in

 \noindent {\bf Proposition 3.} {\it For $K = 10_{82}$, $g_3(K) = 4$ and $g_4(K) = 1$.  There are knots $J$ with $g_3(J) \le 3$ that are algebraically concordant to $K$, the first of which is $9_{42}$, but $K$ is not  concordant to any such $J$.  In particular, $g_c(10_{82}) = 4$.}

 \vskip.1in
 
\section{Basic polynomial and signature obstructions}

We begin with the computation of the concordance genus for two examples, $6_2$ and $6_2\ \# \ 6_2$ to illustrate the use of the Alexander polynomial and knot signature functions.  The first, $6_2$, was Casson's example answering Gordon's question to the negative.  To start,  we define the  normalized form of the Alexander polynomial and state the Fox-Milnor theorem on Alexander polynomials of slice knots.

\begin{definition} For a knot $K$ with Seifert form $V_K$ we let $\Delta_K(t) = t^{-a} \det(V_K - tV_K^t)$, where $a$ is chosen so that $\Delta_K(t) \in \zz[t]$ and $\Delta_K(0) \ne 0$. This is well-defined up to sign.
\end{definition}

\begin{theorem}[Fox-Milnor~\cite{fm}] If $K$ is a slice knot, then $\Delta_K(t) = t^d f(t)f(t^{-1})$ for some polynomial $f(t)$ of degree $d$.
\end{theorem}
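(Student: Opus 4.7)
The plan is to reduce the theorem to two pieces: a geometric claim that a slice knot admits a \emph{metabolizer} for its Seifert form, followed by a block matrix determinant computation.

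For the geometric step, let $F \subset S^3$ be a Seifert surface for $K$ of genus $g$, and let $D \subset B^4$ be a (smooth or locally flat) slice disk. Push $F$ slightly into $B^4$ so that $\partial F = \partial D$, and consider $X = B^4 \setminus N(D)$. I would focus on $H = \ker\bigl(H_1(F;\zz) \to H_1(X;\zz)\bigr)$. Using Alexander duality to compute $H_1(X;\qq) \cong \qq$, together with Poincar\'e--Lefschetz duality on the compact $4$-manifold $X$, a half-lives-half-dies argument shows that $H$ has rank $g$, half of the rank of $H_1(F;\zz)$. Moreover, if $\alpha, \beta$ are simple closed curves on $F$ representing classes in $H$, both $\beta$ and a small positive pushoff $\alpha^+$ bound surfaces in $X$; pushing one of these surfaces off of the other shows that $V_K(\alpha, \beta) = \operatorname{lk}(\alpha^+, \beta) = 0$, so $V_K$ vanishes on $H$.

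Choosing a basis of $H_1(F;\zz)$ whose first $g$ elements span $H$, the Seifert matrix takes the block form
$$V_K = \begin{pmatrix} 0 & A \\ B & C \end{pmatrix},$$
with $A, B, C$ integer $g \times g$ blocks. A direct block-determinant expansion gives
$$\det(V_K - tV_K^T) = (-1)^g \det(A - tB^T)\,\det(B - tA^T).$$
Setting $f(t) = \det(A - tB^T)$, transposing and factoring $-t$ out of each row yields $\det(B - tA^T) = (-t)^g f(t^{-1})$, so $\det(V_K - tV_K^T) = t^g f(t) f(t^{-1})$. After the normalization $\Delta_K(t) = t^{-a}\det(V_K - tV_K^T)$ in the definition, this becomes $\Delta_K(t) = t^d f(t) f(t^{-1})$ where $d = \deg f$.

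The main obstacle is the geometric step: constructing the metabolizer and verifying both the rank count and the vanishing of $V_K$. The rank statement requires comparing the ranks of $H_1(F)$, $H_1(\partial X)$, and $H_1(X)$ through the long exact sequence of $(X, \partial X)$ combined with duality, while the vanishing requires producing explicit cobounding surfaces in the disk complement and controlling linking numbers in $S^3$ using them. Once the metabolizer is in hand the remainder of the proof is pure linear algebra.
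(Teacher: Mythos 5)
The paper itself offers no proof of this statement; it is quoted with a citation to Fox--Milnor, so your proposal has to stand on its own. Your linear-algebra half does: given a rank-$g$ direct summand of $H_1(F;\zz)$ on which the Seifert form vanishes, the block computation $\det(V_K-tV_K^T)=(-1)^g\det(A-tB^T)\det(B-tA^T)=t^g f(t)f(t^{-1})$ is correct, and the normalization bookkeeping is routine (up to the usual $\pm$ sign ambiguity).

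The genuine gap is in the geometric step, and it is not just missing detail --- the metabolizer you define is the wrong one. With $X=B^4\setminus N(D)$ one has $H_1(X;\zz)\cong\zz$ generated by the meridian of $D$, and a curve $\alpha\subset F$ maps to $\operatorname{lk}(\alpha,K)$ times that meridian; but every curve on a Seifert surface has $\operatorname{lk}(\alpha,K)=0$ (push $F$ off itself to get a $2$--chain bounded by $K$ missing $\alpha$). Hence $\ker\bigl(H_1(F;\zz)\to H_1(X;\zz)\bigr)$ is all of $H_1(F;\zz)$, of rank $2g$, and no half-lives-half-dies argument for $(X,\partial X)$ will cut it down to $g$. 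Your vanishing argument shows the same symptom: ``pushing one of these surfaces off of the other'' is exactly the content at stake, since the algebraic intersection number of two $2$--chains in $B^4$ with prescribed boundaries equals $\operatorname{lk}(\alpha^+,\beta)$ and in general cannot be removed; as written, your argument would prove $V_K\equiv 0$ for every slice knot, which is false. The standard repair is to work one dimension up inside $B^4$: since $B^4\setminus(F\cup D)$ maps to $S^1$, the closed surface $F\cup D$ bounds an embedded compact oriented $3$--manifold $R\subset B^4$ (preimage of a regular value). Take $H=\ker\bigl(H_1(F;\qq)\to H_1(R;\qq)\bigr)$; half-lives-half-dies applied to the $3$--manifold $R$ with $\partial R=F\cup D$ gives $\dim H= g$, and for $\alpha,\beta$ representing classes of $H$ one chooses $2$--chains $A,B\subset R$ with $\partial A=\alpha$, $\partial B=\beta$ and pushes $A$ off $R$ along its normal direction (extending the positive normal of $F$ in $S^3$) to get a chain $A^+$ with $\partial A^+=\alpha^+$ and $A^+\cap R=\emptyset$; then $\operatorname{lk}(\alpha^+,\beta)=A^+\cdot B=0$. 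With $H$ replaced by (a primitive integral form of) this subgroup, the rest of your argument goes through. Incidentally, Fox and Milnor's original proof is different in flavor, arguing through the Alexander module of the slice-disk complement rather than through Seifert forms; the metabolizer route you chose is the now-standard one, it just needs the $3$--manifold $R$, not the slice-disk complement, as the ambient object.
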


If $K$ bounds a surface of genus $g$ in $S^3$, then it has a $2g \times 2g$ Seifert matrix, from which follows the well-known bound on the 3--genus:

\begin{theorem} For a knot $K\subset S^3$, $2 g_3(K) \ge \deg(\Delta_K(t))$.
\end{theorem}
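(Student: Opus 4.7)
The plan is to extract the bound directly from the definition of $\Delta_K(t)$ and the size of a minimal-genus Seifert matrix. Let $g = g_3(K)$, and let $F \subset S^3$ be an embedded genus-$g$ Seifert surface for $K$. Since $H_1(F;\zz) \cong \zz^{2g}$, the Seifert form on $F$ is represented by a $2g \times 2g$ integer matrix $V$, which we may use as $V_K$ in the definition of $\Delta_K(t)$.

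Now examine $p(t) := \det(V - tV^t)$. Each entry of the matrix $V - tV^t$ is a polynomial in $t$ of degree at most $1$, so expanding the determinant gives a polynomial $p(t) \in \zz[t]$ of degree at most $2g$. Write $p(t) = t^a q(t)$ where $q(t) \in \zz[t]$ has $q(0) \ne 0$; by the definition given, $\Delta_K(t) = q(t)$. Since $\deg q(t) = \deg p(t) - a \le 2g - a \le 2g$, we conclude $\deg \Delta_K(t) \le 2g = 2g_3(K)$, which is the desired inequality.

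There is really no obstacle here: the proof is essentially bookkeeping about the dimension of $H_1$ of a minimal genus Seifert surface combined with the multilinearity of the determinant. The only point that requires a moment's care is confirming that the normalization by $t^{-a}$ does not inflate the degree, which it clearly cannot since it removes a factor of $t^a$ with $a \ge 0$.
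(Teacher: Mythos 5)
Your proof is correct and follows the same route as the paper: a minimal genus Seifert surface yields a $2g \times 2g$ Seifert matrix, so $\det(V - tV^t)$ has degree at most $2g$, and the normalization cannot increase the degree. The only implicit ingredient (used equally by the paper) is that $\Delta_K$ is independent of the chosen Seifert surface, which is standard.
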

  
\begin{example}  {\boldmath {\bf If} $K = 6_2$:  $g_3(K) = 2, g_c(K) = 2, g_4(K) =1$.}

   A Seifert surface for  $6_2$  of   genus 2 is easily found, and since  $\Delta_{6_2}(t) = 1 - 3t+3t^2-3t^3+t^4$, we have $g_3(6_2) = 2$.

Since $K$ has unknotting number one, $g_4(6_2) \le 1$; by the Fox-Milnor theorem, since $\Delta_{6_{2}}$ is irreducible, $6_2$ is not slice, so $g_4(6_2) = 1$.

Finally, one sees that $g_c(6_2) = 2$ as follows.  If $6_2$ is concordant to $J$ with $g_3(J) \le 1$, then $6_2 \# - J$ is slice and by the Fox-Milnor theorem, $\Delta_{6_2}(t)\Delta_J(t) = t^d f(t)f(t^{-1})$ for some $f(t)$.  But since $\Delta_{6_2}(t)$ is irreducible and of degree 4, while  $\deg(\Delta_J(t)) \le 2$,  this is not possible.

\end{example}

\begin{example}{\boldmath {\bf If} $K = 6_2 \# 6_2$:  $g_3(K) = 4, g_c(K) = 4, g_4(K) =2$.}

 The knot $  6_2 \# 6_2$ cannot be handled in the same way, since its Alexander polynomial is $\Delta_{6_2 \# 6_2}(t) = (1 - 3t+3t^2-3t^3+t^4)^2$, which is, in fact, the Alexander  polynomial of the slice knot  $6_2 \# {-}6_2$. By the additivity of the 3--genus, we do have that $g_3(6_2 \# 6_2) = 4$.  Introducing the signature function permits the further analysis of this example.

For any knot $K$, the  Murasugi~\cite{mu}    4--genus bound is  given by $2g_4(K) \ge |\sigma(K)|$, where $\sigma(K)$ is the signature of the symmetrized Seifert form $V_K + V_K^t$.  From our observation that $g_4(6_2) =1$ we have $g_4(6_2 \# 6_2) \le 2$; 
 also,    $\sigma(6_2 \# 6_2) = 4$, and so $g_4(6_2 \# 6_2) = 2$.
 
 The Levine-Tristram signature function of a knot, $\sigma_K(\omega)$, is   the function defined on the unit complex circle as  the local average of  the signature of the hermetianized Seifert form $(1-\omega V) + (1-\overline{\omega})V^t$, $\omega \in S^1$. (See~\cite{le2, tr} or see~\cite{go2} for a general survey of signature invariants.)   The Murasugi bound generalizes to $2g_4(K) \ge |\sigma_K(\omega)|$, and as a consequence,   $\sigma_K$ is a concordance invariant.  
 
 For a knot $K$, its signature function $\sigma_K(\omega),  \omega \in S^1$, is an integer-valued function.  The only discontinuities of  $\sigma_K(\omega) $ occur at roots of $\Delta_K(t)$.  For $\omega$ near 1, $\sigma_K(\omega) = 0$.  Thus, since $\sigma(6_2 \# 6_2) = \sigma_{6_2 \# 6_2}(-1) = 4$, we see that $\Delta_{6_2 \# 6_2}(t)$ must have a root on the unit circle and the signature function has   a jump at  one such root.  (In fact, this polynomial has a unique conjugate pair of unit roots.)
 
 If $6_2 \# 6_2$ is concordant to $J$, then the signature function $\sigma_J(\omega)$ must similarly  have a jump at a root of $\Delta_{6_2 \# 6_2}$, and it immediately follows that $1-3t+3t^2-3t^3+t^4$ divides $\Delta_J(t)$.  It then follows from  the Fox-Milnor theorem that $(1-3t+3t^2-3t^3+t^4)^2$ divides $\Delta_J(t)$, and so we see that $g_c(6_2 \# 6_2) = 4$.
 \end{example}
 
 \section{The algebraic concordance group}
 Knot signatures and exponents of symmetric irreducible factors of the Alexander polynomial yield invariants of Levine's algebraic concordance group $\calg^\zz$.  These invariants are in fact   invariants of the real algebraic concordance group, $\calg_\rr$. In~\cite{liv1} a careful study of such invariants arising from $\calg_\rr$, generalizing the examples of the previous section, was applied to determine the concordance genus of most low crossing number knots.  To extend that study we need to consider invariants of the rational algebraic concordance group, $\calg_\qq$.  We begin by reviewing some of the basic definitions and results, taken from~\cite{le2}.  
 
 The {\it algebraic concordance group} $\calg^\zz$  is defined via Seifert matrices $V$, integer matrices satisfying $\det(V - V^t ) = 1$.  Such a matrix of size $2g \times 2g$ is called {\it Witt trivial} if there is a subspace of $\qq^{2g}$ of dimension $g$ on which the bilinear form determined by $V$ is identically 0.  Two Seifert matrices $V$ and $W$ are called {\it algebraically concordant}  if $V \oplus -W$ is Witt trivial.  This is an equivalence relation and the set of equivalence classes forms the abelian group $\calg^\zz$ with operation induced by direct sum.   One of Levine's theorems is the following.
 
 \begin{theorem} If $K$ and $J$ are concordant, then $[V_K] = [V_J] \in \calg^\zz$.
 \end{theorem}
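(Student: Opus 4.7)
The plan is to reduce the theorem to the slice case and then exhibit, for any slice knot, an explicit Lagrangian for the Seifert form coming from the geometry of a slice disk.

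First I reduce. Under connected sum Seifert forms block-sum, and taking the reversed mirror negates the class in $\calg^\zz$, so $[V_K] = [V_J]$ if and only if $[V_{K\,\#\,-J}] = 0$. Since $K$ and $J$ are concordant precisely when $L := K\,\#\,-J$ is slice, it suffices to prove that \emph{every slice knot has Witt-trivial Seifert form.}

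Given $L$ slice, fix a Seifert surface $F \subset S^3$ of genus $g$ and a slice disk $D \subset B^4$. Pushing $\operatorname{int} F$ into $B^4$, form the closed, properly embedded genus-$g$ surface $\Sigma := F \cup_L D$. Since $H_2(B^4;\qq) = 0$, the class $[\Sigma]$ vanishes, and a general-position argument promotes a bounding $3$-chain to a compact oriented $3$-submanifold $M \subset B^4$ with $\partial M = \Sigma$. Set $\Lambda := \ker\bigl(H_1(F;\qq) \to H_1(M;\qq)\bigr)$, where the map factors through $H_1(\Sigma;\qq) \cong H_1(F;\qq)$ (because $D$ is contractible); the half-lives-half-dies lemma for compact oriented $3$-manifolds gives $\dim_\qq \Lambda = g$, so $\Lambda$ is a candidate Lagrangian.

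It remains to verify $V_L|_\Lambda = 0$. Let $a, b \in \Lambda$ be represented by disjoint simple closed curves on $F$ that bound $2$-chains in $M$. Choose such a $2$-chain $A$ with $\partial A = a$, placed in general position so that $A \cap F = a$, and push $A$ slightly off $F$ to the positive-normal side in $B^4$, producing a $2$-chain $A^+$ with $\partial A^+ = a^+$ (the Seifert pushoff) and $A^+ \cap F = \emptyset$. Then the pairing $V_L(a,b) = lk(a^+, b)$ in $S^3$ equals the intersection number $A^+ \cdot b$ in $B^4$, which vanishes because $A^+$ is disjoint from $F \supset b$. The main subtlety is in the geometric setup, chiefly producing the bounding $3$-submanifold $M$ from the null-homologous $\Sigma$ and arranging $A$ and its normal pushoff in general position so that the pushoff computation genuinely evaluates to zero; once these transversality issues are handled, the vanishing is a direct intersection calculation.
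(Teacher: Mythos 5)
The paper itself does not prove this statement; it quotes it as one of Levine's theorems. Your strategy --- reduce to showing that a slice knot has Witt-trivial Seifert form, bound the closed surface $\Sigma = F \cup_L D$ by a compact oriented $3$--manifold $M \subset B^4$, and take $\Lambda = \ker\bigl(H_1(F;\qq) \to H_1(M;\qq)\bigr)$ --- is exactly the classical geometric argument, and the reduction and the half-lives-half-dies dimension count are fine. (Two small caveats: the existence of the embedded $M$ is usually obtained from a map to $S^1$ dual to $\Sigma$ made transverse to a point, not by ``promoting a bounding $3$--chain''; and classes in $\Lambda$ only bound $2$--chains in $M$ after clearing denominators, which is harmless since you work over $\qq$.)

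The genuine problem is the final verification, which as written does not make sense and does not prove what you need. In $B^4$ a $2$--chain $A^+$ and a $1$--cycle $b$ are of total dimension $3 < 4$, so ``the intersection number $A^+ \cdot b$ in $B^4$'' is undefined; generically they are disjoint no matter what, and this disjointness says nothing about $lk(a^+,b)$. The correct identity is $lk(a^+,b) = A^+ \cdot B$, where $B$ is a $2$--chain in $B^4$ with $\partial B = b$ and $A^+$ is a $2$--chain with $\partial A^+ = a^+$; to get zero you must arrange $A^+$ disjoint from some such $B$, and disjointness from $F$ is not enough. Moreover ``pushing $A$ off the positive-normal side of $F$'' is not meaningful in $B^4$, where $F$ has a $2$--dimensional normal bundle. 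The standard repair: take $B$ inside $M$ (possible since $b \in \Lambda$, up to a nonzero multiple), and obtain $A^+$ by pushing $A \subset M$ off along a nowhere-zero normal vector field of the $3$--manifold $M$ in $B^4$ (its normal bundle is a trivial line bundle), chosen so that along $a \subset F$ it restricts to the positive Seifert pushoff direction in $S^3$. Then $A^+ \cap M = \emptyset$, hence $A^+ \cap B = \emptyset$, and $V_L(a,b) = lk(a^+,b) = A^+ \cdot B = 0$ for all $a, b \in \Lambda$. With the pushoff taken off $M$ rather than off $F$, and the linking number computed against a $2$--chain for $b$ rather than against $b$ itself, your argument becomes the standard proof.
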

 
 Levine showed that there is an injection   $\psi\co \calg^\zz \to \calg_\qq$, where the second group is the group of rational isometric structures, defined as  follows.  An element in $\calg_\qq$ is represented by a pair $(Q, T)$ where $Q$ is a nonsingular symmetric bilinear form on an $n$--dimensional rational vector space for some $n$ and    $T$ is an isometry of that form.  (So, if $Q$ and $T$ are represented by matrices, $T^tQT=Q$.)  Such a pair is called {\it Witt trivial} if $Q$ vanishes on a $T$--invariant subspace of dimension $g$, where $2g = n$.  Isometric structures $(Q_1,T_1)$ and $(Q_2, T_2)$ are called {\it Witt equivalent} if   $(Q_1,T_1) \oplus  (-Q_2, T_2)$ is Witt trivial.  The set of equivalence classes forms the abelian group $\calg_\qq$ with operation induced by direct sum.
 
 The injective homomorphism $\psi\co \calg^\zz \to \calg_\qq$ is induced by the map $V \to (V+V^t, V^{-1}V^t)$.  To show this is well-defined, one proves that every class in $\calg^\zz$ can be represented by an invertible matrix.  It is clear that for an invertible Seifert matrix $V$, $\Delta_V(t) = \det(V) \Delta_{T}(t)$, where $T = V^{-1}V^t$ and  $\Delta_{T}(t)$ is the characteristic polynomial of $T$. 
 
 One  observations of Levine in~\cite{le2} is the following.
 
 \begin{theorem} For a symmetric monic irreducible polynomial $\delta$, the set of  Witt classes of isometric structures $(Q,T)$ for which $\Delta_T(t) = \delta^k$ for some $k$ is a subgroup $\calg_\qq^\delta \subset \calg_\qq$.   There is  an isomorphism  $\phi\co\calg_\qq \to  \oplus_\delta\  \calg_\qq^\delta$ given as the direct sum of projection maps $\phi_\delta$.  Here $\phi_\delta(Q,T)$ is the restriction of $(Q,T)$ to the subspace  annihilated by $\delta^k(T)$ for large $k$.
 
 \end{theorem}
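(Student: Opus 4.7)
The plan is to establish the decomposition through the primary decomposition of $T$ acting on the underlying vector space, showing that the pieces corresponding to different symmetric irreducible factors are orthogonal, and that non-symmetric factor pieces are automatically Witt trivial.

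First I would record the basic adjointness: the relation $T^tQT=Q$ is equivalent to $Q(Tv,w)=Q(v,T^{-1}w)$, so $T$ and $T^{-1}$ are adjoints with respect to $Q$, and for every polynomial $p$, $Q(p(T)v,w)=Q(v,p(T^{-1})w)$. A standard consequence is that $\Delta_T$ is symmetric, so its irreducible factors come in pairs $\{\delta,\tilde\delta\}$, where $\tilde\delta(t)=t^{\deg\delta}\delta(1/t)/\delta(0)$; the symmetric factors are exactly the fixed points. Let $V_\delta$ denote the generalized kernel $\ker\delta(T)^N$ for $N$ large. Then $V=\bigoplus_\delta V_\delta$ as a $T$-module. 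The key orthogonality statement I would prove next is: if $\delta'\neq \tilde\delta$, then $Q$ vanishes on $V_\delta\times V_{\delta'}$. Indeed, for $v\in V_\delta$ and $w\in V_{\delta'}$ we have $Q(\delta(T)^Nv,w)=0=Q(v,\delta(T^{-1})^Nw)$; but $\delta(T^{-1})$ acts on $V_{\delta'}$ with minimal polynomial a power of $\tilde\delta$ (up to unit), which is coprime to $\delta$ when $\delta\neq\tilde\delta'$, so $\delta(T^{-1})^N$ is invertible on $V_{\delta'}$, forcing $Q(v,w)=0$.

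Next I would dispose of the non-symmetric factors. For an irreducible non-symmetric $\delta$, the preceding orthogonality shows that $V_\delta$ is totally isotropic inside $V_\delta\oplus V_{\tilde\delta}$. Since $V_\delta$ is $T$-invariant and has dimension exactly half of $V_\delta\oplus V_{\tilde\delta}$ (the $Q$-pairing between them is nondegenerate by the same orthogonality argument applied the other way), this summand is Witt trivial. Thus in $\calg_\qq$ only the summands indexed by symmetric irreducible $\delta$ survive. For symmetric $\delta$, the same orthogonality gives $V_\delta^\perp=\bigoplus_{\delta'\neq\delta}V_{\delta'}$, so $Q|_{V_\delta}$ is nondegenerate and $(Q|_{V_\delta},T|_{V_\delta})$ is a genuine isometric structure, representing a class $\phi_\delta(Q,T)\in\calg_\qq^\delta$.

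The step I expect to require the most care is showing $\phi_\delta$ descends to Witt classes, i.e.\ that if $(Q,T)$ is Witt trivial then so is each $\phi_\delta(Q,T)$. Suppose $L\subset V$ is a $T$-invariant Lagrangian. Because $L$ is $T$-invariant, primary decomposition for $T|_L$ splits $L=\bigoplus_\delta(L\cap V_\delta)$, and each $L\cap V_\delta$ is a $T$-invariant isotropic subspace of $V_\delta$, hence has dimension at most $\tfrac12\dim V_\delta$ when $\delta$ is symmetric (and at most $\dim V_\delta$ in general). Counting, $\tfrac12\dim V=\dim L=\sum_\delta\dim(L\cap V_\delta)\le\tfrac12\dim V$ after pairing up the non-symmetric primary summands, forcing equality in every inequality. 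In particular each $L\cap V_\delta$ is a Lagrangian of $V_\delta$ for symmetric $\delta$, so $\phi_\delta(Q,T)=0$. The same argument, read backwards, proves injectivity of $\phi=\oplus\phi_\delta$: a choice of Lagrangian in each $\phi_\delta(Q,T)$ together with any $V_\delta$ ($\delta$ non-symmetric) assembles to a $T$-invariant Lagrangian in $V$. Surjectivity is trivial, since any tuple of Witt classes with prescribed characteristic polynomial support can be realized by direct sum. This establishes both that $\calg_\qq^\delta$ is a subgroup (it is the image of the obvious inclusion) and that $\phi$ is an isomorphism.
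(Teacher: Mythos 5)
Your proof is correct. Note that the paper itself gives no argument for this statement -- it is quoted from Levine's \emph{Invariants of knot cobordism} -- and what you have written is essentially the standard proof from that source (and from Milnor's treatment of isometries of inner product spaces): primary decomposition of $T$, the adjointness $Q(p(T)v,w)=Q(v,p(T^{-1})w)$ forcing $Q(V_\delta,V_{\delta'})=0$ unless $\delta'=\tilde\delta$, Witt triviality of the paired non-symmetric summands, nondegeneracy of $Q$ on each symmetric summand, and the splitting $L=\bigoplus_\delta (L\cap V_\delta)$ of a $T$-invariant Lagrangian with the dimension count forcing each symmetric piece of $L$ to be Lagrangian. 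The one spot to tidy is the injectivity step: $\phi_\delta(Q,T)=0$ in $\calg_\qq^\delta$ says a priori only that the restriction $(Q|_{V_\delta},T|_{V_\delta})$ is Witt \emph{equivalent} to a metabolic form, not that it contains an invariant Lagrangian itself; either invoke the standard fact that a class-trivial isometric structure over a field of characteristic zero is metabolic, or avoid it by adding to $(Q,T)$ the metabolic forms $-B_\delta$ witnessing the equivalences (this does not change the Witt class, and your assembly of Lagrangians then applies verbatim to the enlarged form). With that one-line patch the argument is complete, and it also cleanly delivers the subgroup statement, since closure of $\calg_\qq^\delta$ under sum and negation is immediate from the definition.
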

 
 \noindent{\bf Notation.} We will denote $\phi_\delta(Q,T)$ by $(Q,T)^\delta$ and when we focus on the individual components, we will denote them $Q^\delta$ and $T^\delta$.
 \vskip.1in
 
 The next section will illustrate the explicit computation of the decomposition of an element in $\calg_\qq$.  First, we note the following  corollary.
 
 \begin{corollary}\label{maincor} If $(Q, T) \in \calg_\qq$ and $\phi_\delta(Q,T)$ is nontrivial, then $\delta(t) $ divides $\Delta_T(t)$.
 
 \end{corollary}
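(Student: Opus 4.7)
The plan is to prove the contrapositive: assuming $\delta(t)$ does not divide $\Delta_T(t)$, I will show that $\phi_\delta(Q,T)$ is the trivial element of $\calg_\qq^\delta$. The key observation is that since $\delta$ is monic irreducible and does not appear as a factor of $\Delta_T$, the polynomials $\delta^k$ and $\Delta_T$ are coprime in $\qq[t]$ for every $k \ge 1$. Bézout's lemma then supplies $a(t), b(t) \in \qq[t]$ with $a(t)\delta^k(t) + b(t)\Delta_T(t) = 1$.

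Next I would invoke the Cayley--Hamilton theorem, which gives $\Delta_T(T) = 0$. Substituting $T$ into the Bézout identity yields $a(T)\delta^k(T) = I$, so the operator $\delta^k(T)$ is invertible on the ambient rational vector space on which $(Q,T)$ lives. Its kernel is therefore the zero subspace for every $k$. By the description of $\phi_\delta$ in the preceding decomposition theorem, $\phi_\delta(Q,T)$ is the restriction of $(Q,T)$ to $\ker \delta^k(T)$ for $k$ large, so this projection is represented by the empty (zero-dimensional) isometric structure, which is the identity of $\calg_\qq^\delta$. This contradicts the hypothesis that $\phi_\delta(Q,T)$ is nontrivial, completing the contrapositive.

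I do not anticipate any significant obstacle: the argument is essentially the standard primary decomposition for an operator on a finite-dimensional vector space, packaged with the explicit formula for $\phi_\delta$ supplied by the previous theorem. The one subtlety worth flagging is that Cayley--Hamilton is applied to the characteristic polynomial of $T$ (which by the remark preceding the theorem differs from $\Delta_V$ only by the scalar $\det V$, hence has the same irreducible factors), so no confusion arises between the Alexander polynomial normalization of Definition 1.1 and the characteristic polynomial relevant to the decomposition.
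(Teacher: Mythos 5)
Your proposal is correct and follows essentially the same route as the paper: the paper's proof simply observes that when $\delta$ does not divide $\Delta_T$, the operator $\delta(T)$ is an isomorphism and hence has trivial kernel, which is exactly the content of your Bézout/Cayley--Hamilton argument spelled out in more detail. No gaps; your extra care with the normalization of $\Delta_T$ versus the characteristic polynomial is fine but not needed beyond what the paper already notes.
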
 
 
 \begin{proof}If $\delta(t)$ does not divide $\Delta_T(t)$, then $\delta(T)$ acts as an isomorphism of the underlying vector space and thus has no kernel.
 \end{proof}


\section{Applications of rational invariants: examples}\label{exampsection}
In this section we determine the concordance genus of the eight and nine crossing knots that could not be resolved in~\cite{liv1}.

\begin{example}
{\boldmath {\bf If} $K = 8_{18}\co    g_3(K) = 3,\ g_c(K) = 3,\  g_4(K) =1$.}

The knot $8_{18}$ bounds a Seifert surface of genus 3 and has Alexander polynomial $\Delta_{8_{18}}(t)  = (t^2-t+1)^2(t^2-3t+1)$. Thus, $g_3(8_{18}) = 3$.

The unknotting number of $8_{18}$ is two, but the two crossing changes are of opposite signs.  Thus, $8_{18}$ bounds an immersed disk $D$ in $B^4$ with two double points of opposite sign.  Two small disks on $D$, one at each double point, can be removed and the pair of discs replaced by an annulus missing $D$, resulting in a embedded punctured torus in $B^4$ bounded by $8_{18}$.  Since the Alexander polynomial   has an irreducible symmetric factor with odd exponent, by the Fox-Milnor theorem $8_{18}$ is not slice, so $g_4(8_{18}) = 1$.

If $8_{18}$ is concordant to a knot $J$, then by the Fox-Milnor theorem $ t^2-3t+1$ divides $\Delta_J(t)$.  We wish to show that $ t^2-t+1$ also divides  $\Delta_J(t)$, which implies via the Fox-Milnor theorem that $(t^2-t+1)^2$ also divides $\Delta_J(t)$, implying that $\Delta_J(t)$ is of degree at least six, so that $g_3(J) \ge 3$, and hence $g_c(8_{18}) = 3.$  We note that $\sigma_{8_{18}}(\omega)$ is identically 0, so   signature calculations do not yield any information.

In order  to apply Corollary~\ref{maincor}, we need to show that for $8_{18}$ the projection of its isometric structure on $\calg_\qq^{t^2 - t+1}$ is nontrivial.  

The Seifert matrix for $8_{18}$, $  V_{8_{18}}$, is $6\times 6$.  For the associated isometric structure (defined on $\qq^6$), $(Q,T)$, we have that $\Delta_T(t) =  (t^2-t+1)^2(t^2-3t+1)$.  Thus,  $(T^2-T+1)^2(T^2-3T+1)$ annihilates all of $\qq^6$.  The summand of $\qq^6$ annihilated by a power of $T^2 - T +1$ is precisely the image of the transformation $  T^2-3T+1 $.  The transformation  $T$ can be expressed in matrix form by $V^{-1}V^t$ and a  basis for this image is simply a basis for  the column span of the matrix representation of  $  T^2-3T+1$, which can be found, for instance, using Gauss-Jordan elimination.  Here our calculations were aided by the computer program Maple.

Carrying out that calculation, it is found that the column span is 4--dimensional, with basis, say, $\{b_1, b_2, b_3, b_4\}$. A $4\times 4$ matrix representation of the quadratic form of $(Q,T)^{t^2-t-1}$ is given by  the matrix with entries $b_i ^t Q b_j$.  Starting with one particular Seifert matrix, as given in~\cite{lc}, the resulting matrix is:

$$ M = \left[ \begin {array}{cccc} 4&-2&0&-2\\\noalign{\medskip}-2&2&2&3
\\\noalign{\medskip}0&2&-2&1\\\noalign{\medskip}-2&3&1&2\end {array}
 \right] .$$

We claim this form is not trivial in the Witt group of symmetric bilinear forms over $\qq$, $W(\qq)$.  To see this, we apply a homomorphism $\partial_3\co W(\qq) \to W(\zz / 3 \zz)$, where $W(\zz / 3 \zz)$ is the Witt group of symmetric forms over the field with three elements.  The homomorphism $\partial_3$ can be defined via  the following algorithm.   First,     the matrix is diagonalized so that the diagonal entries are square free integers.  Those diagonal entries that are not divisible by $3$ are deleted, and those that are divisible by 3 are divided by $3$ and then reduced modulo 3.  We demonstrate this with the matrix $M$ above.  Details of the general theory of such homomorphisms can be found in~\cite[Chapter 4]{mh}.  In brief, there is a surjection $\partial \co W(\qq) \to \oplus_p W(\zz / p \zz)$ defined via such maps, and the kernel of $\partial$ is $W(\zz)$.

For the matrix $M$ above, when we diagonalize  we arrive at the  matrix:

$$M_1 = \left[ \begin {array}{cccc} 1&0&0&0\\\noalign{\medskip}0&1&0&0
\\\noalign{\medskip}0&0&-6&0\\\noalign{\medskip}0&0&0&-6\end {array}
 \right]. $$
 Removing the top two entries, which are not divisible by $3$,  dividing the last two entries by $3$, and reducing modulo $3$, gives the matrix with entries in $\zz / 3 \zz$
 
$$M_2 = \left[ \begin {array}{cc}1&0\\\noalign{\medskip}0&1\end {array}
 \right]. $$

This form is nontrivial in $W(\zz / 3 \zz)$ since the equation $x^2 + y^2 = 0$ does not have a nontrivial solution  in $\zz / 3 \zz$.
\end{example}


\begin{example}
{\boldmath {\bf If} $K = 9_{40}\co    g_3(K) = 3,\ g_c(K) = 3,\  g_4(K) =1$.}

This example is much like the previous one.  

We have that  $9_{40}$ bounds a Seifert surface of genus 3 and has Alexander polynomial $\Delta_{9_{40}}(t)  = (t^2-t+1)(t^2-3t+1)^2$. Thus, $g_3(9_{40}) = 3$.

As with  $8_{18}$, $9_{40}$ has unknotting number two, but the two crossing changes can be taken to have opposite signs, so    $g_4(9_{40}) = 1$.

If $9_{40}$ is concordant to a knot $J$, then by the Fox-Milnor theorem $ t^2-t+1$ divides $\Delta_J(t)$.  We wish to show that $ t^2-3t+1$ also divides  $\Delta_J(t)$, which implies via the Fox-Milnor theorem that $(t^2-3t+1)^2$ also divides $\Delta_J(t)$, implying that $\Delta_J(t)$ is of degree at least six, so that $g_3(J) \ge 3$, and hence $g_c(9_{40}) = 3.$  Unlike $8_{18}$,  $\sigma_{9_{40}}(\omega) = 2$, but this arises from the $t^2-t+1$ factor; the polynomial $t^2 -3t +1$ has no roots on the unit circle, so again signatures cannot be applied here.

To apply Corollary~\ref{maincor}, we will show that for $9_{40}$ the projection of its isometric structure on $\calg_\qq^{t^2 - 3t+1}$ is nontrivial.  

The calculation at this point is much as before.  The Seifert matrix for $9_{40}$,  $V_{9_{40}}$, is $6\times 6$.  For the associated isometric structure (defined on $\qq^6$), $(Q,T)$, we have that $\Delta_T(t) =  (t^2-t+1)(t^2-3t+1)^2$.  Thus,  $(T^2-T+1)^2(T^2-3T+1)$ annihilates all of $\qq^6$.  The summand of $\qq^6$ annihilated by a power of $T^2 - 3T +1$ is precisely the image of the transformation $  T^2-T+1 $.  Again,  $T$ can be expressed in matrix form by $V^{-1}V^t$ and a basis for the image of  $  T^2-T+1 $ is  a basis for  the column span of the matrix representation of  $ (T^2-3T+1)$.  Continuing with the calculation yields, as the matrix representing the bilinear form, the matrix 

$$ M =  \left[ \begin {array}{cccc} 2&-3&-1&-2\\\noalign{\medskip}-3&2&4&-2
\\\noalign{\medskip}-1&4&2&0\\\noalign{\medskip}-2&-2&0&-4\end {array}
 \right] 
 .$$

To see that this is not trivial in the Witt group of symmetric bilinear forms over $\qq$, $W(\qq)$, we apply a homomorphism $\partial_5\co W(\qq) \to W(\zz / 5 \zz)$, where $W(\zz / 5 \zz)$ is the Witt group of symmetric forms over the field with five elements.  The homomorphism $\partial_5$ can be defined in the same way as $\partial_3$:  diagonalize the matrix so that the diagonal entries are square free  integers; those diagonal entries that are not divisible by $5$ are deleted; and those that are divisible by $5$ are divided by $5$ and then reduced modulo $5$.  For the matrix $N$ above, upon diagonalizing we arrive at

$$M_1 =  \left[ \begin {array}{cccc} 2&0&0&0\\\noalign{\medskip}0&-10&0&0
\\\noalign{\medskip}0&0&1&0\\\noalign{\medskip}0&0&0&-5\end {array}
 \right] 
. $$
The first and third entries are not divisible by $5$, so are removed.  The remaining entries are divided by $5$ and reduced modulo 5 to yield: 
 
$$M_2 = \left[ \begin {array}{cc}3&0\\\noalign{\medskip}0&4\end {array}
 \right]. $$

This form is nontrivial in $W(\zz / 5 \zz)$ since the equation $3x^2 + 4y^2 = 0$ does not have a notrivial solution in $\zz / 5 \zz$.
\end{example}


\section{The knot $10_{82}$: algebraic concordance}

As described in Proposition 3 of the introduction, the situation with the knot $10_{82}$ is much more interesting, and delicate work using twisted Alexander polynomials as Casson-Gordon slicing obstructions is required.  In this section we show that $10_{82}$ is algebraically concordant to a knot $J$ with $g_3(J) = 2$.  In the next section we apply Casson-Gordon theory and twisted Alexander polynomials to show $g_c(10_{82}) = 4$.

The basic facts concerning the knot $10_{82}$ are as follows.  Its Alexander polynomial is $$\Delta_{10_{82}}(t) = \left( {t}^{4}-2\,{t}^{3}+{t}^{2}-2\,t+1 \right)  \left( {t}^{2}-t+1
 \right) ^{2}
.$$ Based on this and the fact that the knot bounds a Seifert surface of genus 4, we have $g_3(10_{82}) = 4$.    Since this knot has unknotting number 1 (and isn't slice by the Fox-Milnor theorem) we have $g_4(10_{82}) = 1$.  

We also have that $\sigma(10_{82}) = 2$.  This arises from a jump of the signature function at the unique root of   $  {t}^{4}-2\,{t}^{3}+{t}^{2}-2\,t+1   $   on the unit circle (with positive imaginary part).  The polynomial $t^2 - t+1$ also has a root on the unit circle, but the signature function for $10_{82}$ does not jump at that root.  

\begin{theorem}\label{thm10_82alg} In the direct sum decomposition $\calg_\qq \cong \oplus_\delta \calg_\qq ^\delta$, the image of the algebraic concordance class of $10_{82}$ in  $\calg_\qq ^{t^2-t+1}$ is Witt trivial and  $10_{82}$ has a 4--dimensional representative in $\calg_\qq$.
\end{theorem}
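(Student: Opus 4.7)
The plan is to follow the same pattern used for $8_{18}$ and $9_{40}$ in the previous section, but now to verify that a particular primary component is Witt trivial rather than nontrivial. Start with an $8 \times 8$ Seifert matrix $V$ for $10_{82}$ and form $Q = V + V^t$ and $T = V^{-1}V^t$. Since $\Delta_T(t) = (t^4 - 2t^3 + t^2 - 2t + 1)(t^2 - t + 1)^2$, the $\qq[T]$-module $\qq^8$ splits as two $T$-invariant $4$-dimensional subspaces: the $(t^2-t+1)$-primary summand, annihilated by $(T^2 - T + 1)^2$, and the complementary summand, annihilated by $T^4 - 2T^3 + T^2 - 2T + 1$. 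The first of these is precisely the image of $P = T^4 - 2T^3 + T^2 - 2T + 1$, and a basis $\{b_1, b_2, b_3, b_4\}$ for its column span can be extracted by Gauss-Jordan elimination. The $4 \times 4$ matrix $M$ with entries $M_{ij} = b_i^t Q b_j$, together with $T$ restricted to the column span, represents $(Q,T)^{t^2 - t + 1}$.

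The heart of the argument is verifying that this isometric structure is Witt trivial, which amounts to exhibiting a $2$-dimensional $T$-invariant subspace of the column span on which $Q$ vanishes identically. Because $T$ restricted here has characteristic polynomial $(t^2-t+1)^2$, every such $2$-dimensional $T$-invariant subspace is either $\ker(T^2 - T + 1)$ (in the non-semisimple case) or cyclic of the form $\qq v + \qq T v$. After writing $M$ down explicitly, one either checks directly that $Q$ vanishes on $\ker(T^2 - T + 1)$, or searches for a cyclic generator $v$ by solving $v^t M v = 0$, $v^t M (T v) = 0$, and $(T v)^t M (T v) = 0$, a small algebraic problem once $T$ and $M$ are in hand. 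Equivalently, one may pass to the associated Hermitian form over $\qq[t]/(t^2 - t + 1) = \qq(\zeta_6)$ and verify triviality there; the real signature invariant vanishes automatically since $\sigma_{10_{82}}$ does not jump at the primitive sixth root of unity, so only the discriminant and local Witt invariants over $\qq(\zeta_6)$ remain to be computed.

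Once $(Q,T)^{t^2 - t + 1}$ is shown to be Witt trivial, the decomposition $\calg_\qq \cong \oplus_\delta \calg_\qq^\delta$ gives $[10_{82}] = [(Q,T)^{t^4 - 2t^3 + t^2 - 2t + 1}] \in \calg_\qq$, and the right side is represented by the restriction of $(Q, T)$ to the complementary $4$-dimensional summand $\ker(T^4 - 2T^3 + T^2 - 2T + 1)$, yielding the desired $4$-dimensional representative. The main obstacle is the Witt triviality step itself: unlike the $8_{18}$ and $9_{40}$ cases, one must produce a \emph{vanishing} witness rather than detect non-vanishing via a local invariant such as $\partial_p$, so the calculation is a genuine existence problem rather than a routine diagonalization, though one that reduces to solvable linear and quadratic equations once $M$ is written down.
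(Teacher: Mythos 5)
Your setup — forming $(Q,T)$ from a Seifert matrix, identifying the $(t^2-t+1)$--primary summand with the image of $T^4-2T^3+T^2-2T+1$, extracting a basis and the matrix $M$, and reducing the theorem to Witt triviality of $(Q,T)^{t^2-t+1}$ — matches the paper, and your second suggested route (passing to the Hermitian form over $\qq[t]/(t^2-t+1)=\qq(\zeta_6)$ and checking signature plus discriminant/local data) is close in spirit to what the paper does. But the decisive step is never actually established. Your first route says that exhibiting a $T$--invariant Lagrangian ``reduces to solvable linear and quadratic equations once $M$ is written down'': whether that quadratic system has a \emph{rational} solution is precisely the content of the claim being proved, so as written this is circular — you produce neither $M$, nor $T^{t^2-t+1}$, nor a candidate isotropic subspace, and nothing in the argument guarantees solvability. (There is also a hidden appeal to Witt decomposition/cancellation for isometric structures in passing from ``the class is Witt trivial'' to ``this particular $4$--dimensional representative contains an invariant Lagrangian''; that needs at least a citation.) Your second route likewise stops at ``only the discriminant and local Witt invariants over $\qq(\zeta_6)$ remain to be computed'' without computing them, and the vanishing of the signature alone does not suffice.

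For contrast, the paper finishes this step with no existence problem to solve: it computes the matrix of $Q^{t^2-t+1}$, diagonalizes it to $\langle -1,1,-7,7\rangle$, and then applies Levine's theorems — Witt triviality in $\calg_\qq$ may be checked in $\calg_\rr$ and every $\calg_{\qq_p}$; if $t^2-t+1$ splits in the completion the structure is automatically trivial; and when it stays irreducible, since the exponent in the characteristic polynomial $(t^2-t+1)^2$ is even, Witt triviality of the isometric structure over $\rr$ or $\qq_p$ is equivalent to Witt triviality of the underlying quadratic form, which is immediate from the alternating signs in the diagonalization. To make your version complete you must either actually exhibit the invariant metabolizer (or verify the vanishing of $Q$ on $\ker(T^2-T+1)$), or carry out the Hermitian-form computation over $\qq(\zeta_6)$ (rank, signature, and discriminant in $\qq^{\ast}/N(\qq(\zeta_6)^{\ast})$, citing the classification of Hermitian forms over number fields); as it stands, the main assertion of the theorem is asserted rather than proved.
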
  

\begin{proof}Proceeding as in the examples of the previous section, we can find a basis for the $t^2-t+1$ summand and compute a matrix representative of the bilinear form:

$$M = \left[ \begin {array}{cccc} -16&-26&12&-4\\\noalign{\medskip}-26&-40&
20&-6\\\noalign{\medskip}12&20&-12&2\\\noalign{\medskip}-4&-6&2&0
\end {array} \right].$$

Diagonilizing yields
$$Q^{t^2-t+1}  =  \left[ \begin {array}{cccc} -1&0&0&0\\\noalign{\medskip}0&1&0&0
\\\noalign{\medskip}0&0&-7&0\\\noalign{\medskip}0&0&0&7\end {array}
 \right]. $$
Clearly, this bilinear form is Witt trivial, but since we have lost track of the isometry, $T^{t^2-t+1}$,  it is not clear that the full isometric structure is Witt trivial.

To show this isometric structure $(Q,T)^{t^2-t+1}$ is Witt trivial, we apply the results of Levine~\cite{le2}.  In brief, the form will be Witt trivial if it is Witt trivial for all completions of $\qq$; that is, if it is Witt trivial in $\calg_\rr$ and $\calg_{\qq_p}$ for all $p$, where $\qq_p$ is the $p$--adic rationals.  In all cases, to check the triviality, one must further factor the Alexander polynomial over the field; fortunately,  the polynomial $t^2 -t+1$ is quadratic, so that if it becomes reducible in the field of interest, it factors into linear factors, and in that case, according to Levine, the isometric structure is automatically Witt trivial.  Thus, we can assume that $t^2-t+1$ is irreducible and no further decomposition of the isometric structure is required.

Next, another theorem in~\cite{le2}, based on a theorem of Milnor, states that if the characteristic polynomial of the isometry has even exponent (such as in our case, where the restriction has characteristic polynomial $(t^2 - t+1)^2$), then over the reals or $p$--adics, the isometric structure is trivial if and only if the associated bilinear form is Witt trivial.  Clearly, with the presence of the alternating signs in the diagonalization, the form, the extension of $Q^{t^2-t+1}$ over the completion $\qq_p$, will be Witt trivial  for all $p$.

\end{proof}

The previous theorem shows that   $(Q,T)^{t^4-2t^3+t^2-2t+1}$ is a $4$--dimensional rational representative of the algebraic concordance class of $10_{82}$.  We have a stronger result.

\begin{theorem}\label{intthm} The algebraic concordance class of $10_{82}$ has a $4 \times 4$ integral representative in $\calg^\zz$.

\end{theorem}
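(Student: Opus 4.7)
The plan is to exhibit a $4\times 4$ Seifert matrix $V$ whose Witt class equals $[V_{10_{82}}]\in\calg^\zz$. Since $\psi\co\calg^\zz\hookrightarrow\calg_\qq$ is injective, it suffices to match images in $\calg_\qq$. By Theorem~\ref{thm10_82alg}, the class $[V_{10_{82}}]\in\calg_\qq$ lies entirely in the summand $\calg_\qq^{\delta}$ for $\delta(t)=t^4-2t^3+t^2-2t+1$, since the only other symmetric irreducible factor, $t^2-t+1$, contributes trivially.

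The natural candidate is a genus-$2$ knot $J$ with $\Delta_J(t)=\delta(t)$ that is algebraically concordant to $10_{82}$; any such $J$ yields a $4\times 4$ integer Seifert matrix $V_J$, and its isometric structure automatically lies in $\calg_\qq^\delta$. The knot $9_{42}$, singled out in the introduction, is the obvious candidate. Failing a direct knot-theoretic source, one can instead construct $V$ algebraically: choose a $\zz$-basis for a $4$-dimensional rational model of the $\delta$-summand $(Q,T)^\delta$ of $10_{82}$ on which both the form and the isometry are integral, and then split the resulting symmetric matrix as $V+V^t$ so that $V^{-1}V^t$ recovers the isometry. Arranging $\det(V-V^t)=1$ can be done by rescaling the chosen basis; this is automatic in the $V_J$ route.

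The remaining task is to show that the isometric structures of $V$ and of $V_{10_{82}}$ agree in $\calg_\qq^\delta$, i.e.\ that $(Q,T)^\delta_{10_{82}}\oplus(-Q_V,T_V)$ is Witt trivial. Following the local-global strategy in the proof of Theorem~\ref{thm10_82alg}, one checks this after extension to $\rr$ and to each $\qq_p$. Over $\rr$, the invariants match because $\sigma(10_{82})=2$ comes entirely from the unique unit root of $\delta$ and the real signature contribution from $V$ is the same. Over each $\qq_p$, Levine's theorem yields automatic triviality on the summands where $\delta$ becomes reducible; on the summands where $\delta$ stays irreducible, Milnor's criterion (quoted in the proof of Theorem~\ref{thm10_82alg}) reduces the question to Witt triviality of the underlying $8$-dimensional bilinear form, which is checked by the diagonalize-and-reduce procedure of Section~\ref{exampsection}.

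The main obstacle I expect is this final $p$-adic step: one must identify the finite set of primes at which the bilinear form can fail to be hyperbolic, essentially the primes dividing its discriminant, and rule each out by a residue-class computation in $W(\zz/p\zz)$ analogous to those carried out for $8_{18}$ and $9_{40}$. Under the $9_{42}$ route, the integrality and determinant conditions on $V$ are free, and the entire proof collapses to this local Witt-triviality check.
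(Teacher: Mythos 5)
Your primary route (exhibit a genus-2 knot, namely $9_{42}$, algebraically concordant to $10_{82}$) is exactly the alternative the paper itself describes in the remark following Theorem~\ref{intthm}, and it is viable; the paper's actual proof instead works purely algebraically, taking $V=Q(1+T)^{-1}$ on the $\delta$-summand, observing that this $V$ is \emph{not} unimodular ($\det(V-V^t)=16$), passing to $V_2=V/2$, and then proving that the multiplication-by-2 involution on $\calg_\qq$ fixes this class. That last step is the real content, and it is precisely where your fallback construction has a genuine gap: the claim that ``arranging $\det(V-V^t)=1$ can be done by rescaling the chosen basis'' does not work. A rational base change $A$ alters $\det(V-V^t)$ only by $\det(A)^2$ while simultaneously having to preserve integrality of the matrix, and there is no a priori reason an integral unimodular representative of the given dimension exists at all --- that is the subtlety of the integral (as opposed to rational) algebraic concordance group that the paper addresses via \cite{st,liv3}. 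The paper's fix, dividing by $2$, changes the class in $\calg_\qq$ by sending $(Q,T)$ to $(2Q,T)$, and showing this involution fixes the class requires the local analysis at $p=2,3,7$, including the observation that $-7\equiv 1 \pmod 8$ is a $2$-adic square.

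Your local-global sketch also needs repair at two points before it constitutes a proof. First, the assertion that Levine's theorem gives ``automatic triviality on the summands where $\delta$ becomes reducible'' over $\qq_p$ is only valid when the irreducible factors are non-symmetric (as happens for the quadratic $t^2-t+1$, which can only break into linear factors); the quartic $t^4-2t^3+t^2-2t+1$ could in principle split into two \emph{symmetric} quadratics over some $\qq_p$, and those summands are not automatically Witt trivial. The paper avoids this by checking irreducibility at the primes that actually matter ($p=3$ via reduction mod $3$, $p=2$ via irreducibility mod $4$). Second, at $p=2$ the residue-class computations ``analogous to those carried out for $8_{18}$ and $9_{40}$'' do not suffice: the maps $\partial_2$ and $\partial_2^e$ do not determine $W(\qq_2)$, and the paper must argue directly with a $2$-adic square to produce a metabolizer. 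Since these prime-by-prime computations (together with matching the full Levine--Tristram signature functions, not just $\sigma(-1)$) are the substance of the argument --- the paper explicitly says the $9_{42}$ verification amounts to the same calculations as its proof of Theorem~\ref{intthm} --- your proposal as written identifies the right framework but leaves the decisive steps either unjustified or undone.
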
  

\begin{proof} A general account of the structure of the integral, as opposed to rational, algebraic concordance group is contained in~\cite{st}.  The particular tools needed here are developed in~\cite{liv3}.

In the examples of Section~\ref{exampsection} we described how the bilinear form for each $\delta$--summand of an isometric structure is found.  That is how the matrix $Q^{t^2-t+1}$ was found in this section.  Once the basis is found for the summand of interest, one can find the matrix representation of the isometry, restricted to the summand, by usual linear algebra techniques: apply the transformation to each basis element and express the result in terms of the basis.  If this is done for the ${t}^{4}-2\,{t}^{3}+{t}^{2}-2\,t+1$ summand of the rational algebraic concordance class of $10_{82}$, one gets the following isometric structure.

$$(Q,T)^{t^4-2t^3+t^2-2t+1} = \left( \left[ \begin {array}{cccc} 0&2&0&0\\\noalign{\medskip}2&0&0&-2
\\\noalign{\medskip}0&0&-4&-2\\\noalign{\medskip}0&-2&-2&-8
\end {array} \right] ,\left[ \begin {array}{cccc} 1&1&-1&1\\\noalign{\medskip}1&0&0&-1
\\\noalign{\medskip}0&0&1&1\\\noalign{\medskip}-1&0&0&0\end {array}
 \right] \right).
 $$

According to Levine~\cite{le2} (except for a change of sign convention) a Seifert matrix $V$ with image an isometric structure $(Q,T)$ is given by $Q(1+T)^{-1}$, if it exists.  In our case, this yields: 

$$V = \left[ \begin {array}{cccc} 0&2&0&2\\\noalign{\medskip}0&0&0&-2
\\\noalign{\medskip}0&0&-2&0\\\noalign{\medskip}-2&0&-2&-4\end {array}
 \right].$$

 Notice that this is {\it not} an integral Seifert matrix, since $\det(V - V^t) \ne \pm 1$.  However, if we divide all the entries by $2$, the resulting matrix, $V_2$, is a Seifert matrix:
 
$$V_2 = \left[ \begin {array}{cccc} 0&1&0&1\\\noalign{\medskip}0&0&0&-1
\\\noalign{\medskip}0&0&-1&0\\\noalign{\medskip}-1&0&-1&-2\end {array}
 \right].$$ 
 
 Multiplication of a Seifert matrix by $2$ has the effect on the corresponding isometric structure of sending $(Q,T)$ to $ (2Q,T)$.  Since multiplication by a square ($2^2$) represents a change of basis and thus doesn't change the Witt class, multiplication by $2$ induces an involution of the Witt group $\calg_\ff$, for all $\ff$. We conclude the proof by observing that $V_2$ is fixed by this involution.  Equivalently, we show that $W = V \oplus -V_2$ is Witt trivial in $\calg^\qq$, Levine's rational analog of the integral algebraic concordance group. As usual, this will be done by mapping the class to   the Witt group of rational isometric structures.  This image class in $\calg_\qq$  is the direct sum   $$(V+V^t, V^{-1}V^t) \oplus -(V_2 + V_2^t , V_2^{-1}V_2^t).$$
 
 We have already seen that rationally the isometric structure for $10_{82}$, which we have been denoting $(Q,T)$, is Witt equivalent to $(V+V^t, V^{-1}V^t)$.  Thus, we want to show that 
 $$W = (Q, T) \oplus -(V_2 + V_2^t , V_2^{-1}V_2^t) $$ is Witt trivial. 
 
 The signature function is  identically 0,  so over the reals the form is trivial.  Thus, to apply Levine's theorem we need to consider only the $p$--adics.  Levine tells us to consider all primes $p$, but according to~\cite{liv3}, if $W$  is in the image of $\calg^\zz$ and is Witt trivial in $\calg_{\qq_p}$ for all prime divisors of  $\det(W) \text{disc}(\Delta_W(t))$, then $W$ is Witt trivial over $\qq$.  (Here {\it disc} denotes the polynomial discriminant.  See~\cite{liv3} or a standard algebra text such as~\cite{df} for details.)
 
 For the $W$ we are considering, the only primes that arise are $p=2$, $p=3$, and $p=7$.  (Discriminants are easily calculated using, for instance, Maple, or, for a quartic, by hand.)  Furthermore, we need to consider only the $t^4 - 2t^3+t^2-2t+1$ part of the 
 class, $W^{t^4 - 2t^3+t^2-2t+1}$, since we have already seen that $W^{t^2-t+1}$ will be Witt trivial for all $p$--adic completions.
 
 The prime $p=7$ is easily dispensed with.  Since $2$ is a square modulo $7$, it is also a  square over the $7$--adic rationals.  Thus, multiplying by $2$ does not change the Witt class of the associated isometric structure.
 
 For the prime $p=3$, the polynomial $\delta = t^4 - 2t^3+t^2-2t+1$ is irreducible in $\qq_3$ (it is irreducible modulo 3), and   the characteristic polynomial of the isometry in $W$ is $\delta^2$.  Thus, since $\delta$ has even exponent, as in the proof Theorem~\ref{thm10_82alg}   we only need to show that the bilinear form associated to $W$ is Witt trivial.  
The diagonalizations of $Q^{ t^4 - 2t^3+t^2-2t+1}$ and $(V_2 + V_2^t)$ are given by the  matrices 
 
 $$ \left[ \begin {array}{cccc} -7&0&0&0\\\noalign{\medskip}0&7&0&0
\\\noalign{\medskip}0&0&-14&0\\\noalign{\medskip}0&0&0&-2\end {array}
 \right]\ \text{and}\   \left[ \begin {array}{cccc} -14&0&0&0\\\noalign{\medskip}0&14&0&0
\\\noalign{\medskip}0&0&-7&0\\\noalign{\medskip}0&0&0&-1\end {array}
 \right]. $$
 Upon taking the direct sum of the first and the negative of the second, and removing the
elements that occur  with their negatives, we arrive at
$$ \left[ \begin {array}{cccc} -14&0&0&0\\\noalign{\medskip}0&-2&0&0
\\\noalign{\medskip}0&0&7&0\\\noalign{\medskip}0&0&0&1\end {array}
 \right]\ .$$

We next require a somewhat detailed analysis of relevant Witt classes over the $p$--adics.  One reference is~\cite{sch}, and~\cite{mh} gives much of the necessary background.   There is an isomorphism $\partial_3 \oplus \partial^e_3 \co W(\qq_3) \to W(\zz/3\zz) \oplus W(\zz/3\zz)$.  The first map is defined as the homomorphism $\partial_p$ used in Section~\ref{exampsection}.  Upon diagonalizing and making the entries square free, consider  only those  diagonal entries that are divisible by $3$, divide by 3, and reduce modulo 3. In the present case there are no such factors.  The map $\partial_3^e$ is defined similarly, except one considers only those factors that are not divisible by $3$, in our case all the entries.  The reduction modulo 3 is
 
  $$ \left[ \begin {array}{cccc} 1&0&0&0\\\noalign{\medskip}0&1&0&0
\\\noalign{\medskip}0&0&1&0\\\noalign{\medskip}0&0&0&1\end {array}
 \right]\ .$$
In $W(\zz/ 3\zz)$ this is Witt trivial.  A metabolizer is generated by the vectors $(1,0,1,1)$ and $(0,1,1,-1)$.  
 
  For the even prime $p = 2$ the maps $\partial_2$ and $\partial_2^e$ are not sufficient to determine the Witt class, so we use a direct argument. 
 To begin, we must first factor the characteristic polynomial over the $2$--adics.  However, one can check that ${t}^{4}-2\,{t}^{3}+{t}^{2}-2\,t+1$ is irreducible modulo $4$, and thus is irreducible in the $2$--adics.  As in the case of considering the prime $p = 3$, we are left to check the triviality of the following form (the same one as above) over the $2$--adic rationals, $\qq_2$.  
 
  $$ \left[ \begin {array}{cccc} -14&0&0&0\\\noalign{\medskip}0&-2&0&0
\\\noalign{\medskip}0&0&7&0\\\noalign{\medskip}0&0&0&1\end {array}
 \right]\ .$$

If $-7$ is a square in the $2$-adics, that is, if there is a $2$--adic $a$ such that $a^2 = -7$, this form will be Witt trivial; a metabolizer would be spanned by the vectors $(1,a,0,0)$ and $(0,0,1,a)$.    A rational integer    is a square  in $\qq_2$ if and only if it is of the form $2^n u$, where $n$ is even $u$ is an odd integer congruent to 1 modulo 8 (see for instance~\cite{se}).  Since $-7$ is congruent to $1$ modulo $8$, we are done.
 \end{proof}

\begin{remark} A more ad hoc approach to showing that there is a $4\times 4$ integral representative of the algebraic concordance class of  $10_{82}$ consists of finding a particular  genus 2 knot which is algebraically concordant to $10_{82}$.  A computer search reveals that $-9_{42}$ is one such knot.  To prove this, one needs to show that $J = 10_{82} \# 9_{42}$ is algebraically slice.  

Both knots have the same signature function,  and hence  $J = 0 \in \calg_\rr$.  Also, the Alexander polynomial of $J$ is $\Delta_J(t) = ( t^4 - 2t^3+t^2-2t +1)^2(t^2-t+1)^2$.  The image of $J$ in $\calg_\qq^{t^2-t+1}$ is, as seen earlier, trivial.  Thus, we need to show that the image in $\calg_\qq^{t^4 - 2t^3+t^2-2t +1}$ is trivial.   As mentioned earlier, according to Levine one now needs to check triviality in all $p$--adic completions of $\qq$, but as in the proof of Theorem~\ref{intthm},   one need   check only at  the primes $p = 2$, $p=3$, and $p=7$.   The actual calculations are much the same as in the proof of Theorem~\ref{intthm} and thus won't be repeated.  However, we should comment on one aspect of the argument.  In defining the isometric structure of a knot we needed to work with a nonsingular Seifert matrix;  that is only required to define the isometric structure.  Thus one can work with the $6 \times 6$ Seifert matrix of $9_{42}$ given in~\cite{lc} even though that matrix has  determinant 0.  The details are not included here.
\end{remark}

\section{Casson-Gordon invariants, twisted polynomials, and ribbon obstructions}

This section is devoted to the proof of the following result, Proposition 3 of the introduction. Recall that $g_3(10_{82}) = 4$ and $g_4(10_{82}) = 1$.  Also  the Fox-Milnor theorem applies to show $2 \le g_c(10_{82}) \le 4$.   
 
\begin{theorem} \label{slicethm}  There does not exist a knot $J$ with $g_3(J) \le 3$ such that $10_{82} \ \# \ -J$ is slice.  In  particular, $g_c(10_{82}) = 4$.   
\end{theorem}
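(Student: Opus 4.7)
The plan is to suppose for contradiction that a knot $J$ exists with $g_3(J)\le 3$ and $K\# -J$ slice, where $K = 10_{82}$, and derive a contradiction in two stages: first an algebraic reduction on $\Delta_J$, then a Casson--Gordon obstruction formulated via twisted Alexander polynomials.

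For the algebraic reduction, set $\delta(t) = t^4 - 2t^3 + t^2 - 2t + 1$. The substitution $u = t + t^{-1}$ converts $\delta$ to $u^2 - 2u - 1$, which is irreducible over $\qq$, so $\delta$ is itself irreducible and palindromic. By Fox--Milnor, $\Delta_K(t)\Delta_J(t) = t^d f(t)f(t^{-1})$, forcing $\delta$ to occur to odd total multiplicity in $\Delta_J$; combined with $\deg \Delta_J \le 2g_3(J) \le 6$ this pins the multiplicity to $1$ and leaves $\Delta_J = \delta(t)g(t)$ with $g$ palindromic of degree at most $2$ and itself a norm $h(t)h(t^{-1})$, $h\in\zz[t]$. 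The algebraic concordance constraints of Theorems~\ref{thm10_82alg} and~\ref{intthm} further restrict $[J] \in \calg^\zz$ to a short explicit list, whose leading representative has $\Delta_J = \delta(t)$.

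Next I would invoke Casson--Gordon in its twisted Alexander polynomial form. One computes $|H_1(\Sigma_2(K))| = |\Delta_K(-1)| = 63 = 7\cdot 9$, so the $7$--primary summand of $H_1(\Sigma_2(K))$ is $\zz/7$; and since $\Delta_J(-1) = \delta(-1)g(-1) = 7 g(-1)$, the $7$--primary summand of $H_1(\Sigma_2(J))$ is likewise of order $7$ (up to a square factor that does not affect the parity argument for metabolicity). A slice disk for $K\# -J$ yields a metabolizer $M$ for the linking form on $H_1(\Sigma_2(K\# -J))$, and its restriction to the $7$--part pairs each nontrivial character $\chi$ of order $7$ on $H_1(\Sigma_2(K))$ with a uniquely determined matching character $\chi'$ on $H_1(\Sigma_2(J))$. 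Vanishing of the Casson--Gordon invariant at $\chi\oplus\chi'$ translates, in the twisted Alexander polynomial formulation, into a Fox--Milnor--type factorization for $\Delta_{K,\chi}(t) \cdot \Delta_{J,\chi'}(t)$. I would compute $\Delta_{K,\chi}(t)$ by Fox calculus from a presentation of $\pi_1(S^3 \setminus K)$, and for every choice of order-$7$ character $\chi$ and each compatible pair $(\Delta_J, \chi')$ arising from the short list rule out the required factorization on grounds of degree, norm, or reduction modulo a small prime.

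The main obstacle is this enumeration: to conclude nonsliceness one must rule out \emph{every} possible metabolizer, not merely exhibit one failing character. Twisted polynomials for all nontrivial characters of order $7$ on $H_1(\Sigma_2(K))$ must be computed, and each must be shown to obstruct pairing with the tightly bounded twisted polynomial of every $J$ on the short list. This is precisely where the recent twisted Alexander polynomial technology is essential, and where the bulk of the computational effort lies; by contrast the algebraic reduction and the passage from slice disks to metabolizer characters are standard, so the delicacy of the proof is concentrated in controlling $\Delta_{K,\chi}$ well enough to exclude every compatible pairing.
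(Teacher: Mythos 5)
Your first paragraph (the Fox--Milnor reduction forcing $\Delta_J(t) = (t^4-2t^3+t^2-2t+1)\,(at-(a-1))((a-1)t-a)$) agrees with the paper, but the Casson--Gordon stage has a genuine gap that your proposed computation cannot close. You work with the $2$--fold branched cover and characters of order $7$. There $|H_1(M_2(10_{82}))| = 63$ with $7$--part $\zz/7$, and the $7$--part of $H_1(M_2(J))$ is generically also $\zz/7$ (and possibly larger, since $\Delta_J(-1) = 7(2a-1)^2$ and $7$ may divide $2a-1$). Because the two sides have $7$--parts of the \emph{same} size, a metabolizer for the linking form of $H_1(M_2(10_{82}\,\#\,{-J}))$ can be diagonal in $\zz/7\oplus\zz/7$; then every order--$7$ character vanishing on it that is nontrivial on the $10_{82}$ summand is forced to be nontrivial on the $J$ summand as well. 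At that point your ``matching character'' $\chi'$ is nontrivial, and $\Delta_{J,\chi'}(t)$ for a nontrivial character is simply not determined by $\Delta_J$, nor by the algebraic concordance class of $J$ (so Theorems~\ref{thm10_82alg} and~\ref{intthm} give you no ``short list'' to enumerate: the short list is of Alexander polynomials, not of knots, and twisted polynomials at nontrivial characters are not functions of these data). With $\Delta_{J,\chi'}$ completely unknown, the product $\Delta_{K,\chi}\Delta_{J,\chi'}$ cannot be obstructed from being a norm by any finite computation of $\Delta_{K,\chi}$ alone; the unknown factor can always supply whatever conjugate partners are needed. So the step ``rule out the required factorization for every compatible pair'' is not merely laborious, it is not available.

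The paper's proof is engineered precisely to avoid this. It uses the $3$--fold branched cover, where $H_1(M_3(10_{82})) = \zz/8\oplus\zz/8$ while the hypothesis on $\Delta_J$ forces the $2$--primary part of $H_1(M_3(J))$ to be only $\zz/2\oplus\zz/2$. This size discrepancy guarantees that \emph{every} metabolizer admits a nontrivial $\zz/2$--valued character vanishing on it and on the entire $J$ summand. Then the restriction $\chi_2$ to the $J$ side is trivial, so $\Delta_{J,\chi_2}$ is computable from $\Delta_J$ alone via the norm $N_2$, and the $10_{82}$ side is an explicit computation; this is Lemma~\ref{charlem}. The price of using $q=2$ is that Theorem~\ref{cg1thm} only gives a factorization over $\qq(\zeta_{2^r})$ for an unspecified $r$, which the paper handles by a Galois--theoretic argument: the splitting field of $t^4-8t^3+10t^2-8t+1$ has dihedral (nonabelian) Galois group and so cannot embed in any abelian cyclotomic extension $\qq(\zeta_{2^r})$. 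If you want to salvage your route, you would need either an argument that forces the character on the $J$ side to be trivial (which fails at the prime $7$ in the double cover for the reason above) or a priori control of twisted polynomials of unknown knots at nontrivial characters, which is exactly what is lacking.
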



\subsection{Casson-Gordon invariants}

Let $K$ by a knot with $p$--fold branched cover $M_p$.  For simplicity we assume that $p$ is a prime.  Let $\chi\co H_1(M_p) \to \zz/q\zz$, where $q$ is a prime.  In this setting there is defined in~\cite{cg1} a Casson-Gordon invariant:  $$\tau(K,\chi) \in W(\qq(\zeta_q)) \otimes \zz[\frac{1}{q}],$$ where $\zeta_q$ is a primitive $q$--root of unity.  The main theorem of~\cite{cg1} states:

\begin{theorem}\label{cgthm} If $K$ is slice, there exists a metabolizer $M$ for the linking form of $H_1(M_p)$ such that for all $\chi$ that vanish on $M$, $\tau(K,\chi) =  0 \in W(\qq(\zeta_{q^r})) \otimes \zz[\frac{1}{q}],$ for some $r \ge 1$.
\end{theorem}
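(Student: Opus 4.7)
The plan is to use the slice disk to build a 4-manifold that simultaneously exhibits the required metabolizer and realizes $\tau(K,\chi)$ as a signature defect forced to vanish. Suppose $K$ bounds a locally flat disk $D \subset B^4$, and let $W_p$ be the $p$-fold cyclic cover of $B^4$ branched along $D$, so that $\partial W_p = M_p$. A standard transfer argument shows $H_\ast(W_p;\qq) \cong H_\ast(B^4;\qq)$; in particular $H_1(W_p;\qq)=0$ so $H_1(W_p)$ is finite, and $\sigma(W_p) = \sum_{k=1}^{p-1}\sigma_K(\zeta_p^k) = 0$ by the Tristram--Levine formula for branched covers together with the vanishing of all Levine--Tristram signatures of the slice knot $K$.

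Next I would produce the metabolizer. Define $M := \ker\bigl(H_1(M_p) \to H_1(W_p)\bigr)$. Poincar\'e--Lefschetz duality applied to $(W_p,M_p)$, together with the long exact sequences in homology and cohomology, yields the classical half-lives-half-dies conclusions $|M|^2 = |H_1(M_p)|$ and $M = M^\perp$ with respect to the $\qq/\zz$-valued linking form on $H_1(M_p)$. So $M$ is a metabolizer, settling the metabolizer assertion of the theorem.

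Fix $\chi\colon H_1(M_p) \to \zz/q\zz$ vanishing on $M$. Then $\chi$ factors through $H_1(M_p)/M$, which by duality embeds into a quotient of $H_1(W_p)$; letting $r$ be the $q$-exponent of the relevant $q$-primary summand of $H_1(W_p)$, a diagram chase provides an extension $\tilde\chi\colon H_1(W_p) \to \zz/q^r\zz$ restricting to $\chi$ on $M_p$. I would then invoke the defining property of $\tau(K,\chi)$: for any 4-manifold $V$ bounding a positive multiple of $(M_p,\chi)$ equipped with such an extension, the invariant is the Witt class in $W(\qq(\zeta_{q^r}))\otimes\zz[1/q]$ of the twisted intersection form on $H_2$ of the induced $\zz/q^r\zz$ cover of $V$, corrected by the ordinary signature. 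Independence of $V$ (standard Casson--Gordon bookkeeping via Novikov additivity and the $G$-signature theorem applied to gluings of two choices of $V$, with the $\zz[1/q]$ factor killing the resulting closed-manifold correction) lets us take $(V,\tilde\chi) = (W_p,\tilde\chi)$, for which the ordinary signature contribution is zero by the first paragraph.

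For the twisted part I would show that the image of the twisted homology of $\partial W_p$ in $H_2(W_p;\qq(\zeta_{q^r})_{\tilde\chi})$ is a Lagrangian for the twisted intersection form, via a half-lives-half-dies argument in twisted coefficients mirroring the metabolizer construction above; the $\zz[1/q]$ factor is used precisely to discard $q$-primary torsion in twisted $H_1$ that would otherwise obstruct the Poincar\'e-duality computation. Existence of such a Lagrangian gives Witt triviality, so $\tau(K,\chi)=0$. The main obstacle is this last step: one must carry out the duality computation in twisted coefficients carefully enough to ensure that the boundary Lagrangian really lifts to an interior one, and to identify exactly which torsion phenomena the $\zz[1/q]$ tensor factor is there to absorb; the earlier steps are essentially formal duality and diagram-chasing.
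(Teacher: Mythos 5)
The paper does not prove this statement at all: it is quoted directly as the main theorem of Casson--Gordon \cite{cg1} (with the Addendum recording when one may take $r=1$), so there is no in-paper argument to compare yours against. Measured against the actual proof in \cite{cg1}, your sketch follows the standard strategy and gets the two structural points right: the metabolizer is $M=\ker\bigl(H_1(M_p)\to H_1(W_p)\bigr)$ for the branched cover $W_p$ of $B^4$ along the slice disk, with $|M|^2=|H_1(M_p)|$ and $M=M^\perp$ by Poincar\'e--Lefschetz duality; and the exponent $r$ enters exactly because a character of order $q$ vanishing on $M$ factors through the image of $H_1(M_p)$ in $H_1(W_p)$ and extends over the finite group $H_1(W_p)$ only after enlarging the target to $\zz/q^r\zz$ (via injectivity of $\qq/\zz$). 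This is precisely the mechanism behind the $\zeta_{q^r}$ in the statement and behind the paper's later need for a Galois-theoretic argument when $q=2$.

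There are, however, genuine gaps. First, $H_*(W_p;\qq)\cong H_*(B^4;\qq)$ is not a transfer argument: transfer only identifies the invariant part of $H_*(W_p;\qq)$ with $H_*(B^4;\qq)$, and the statement is false for general covering degree (take a slice knot with $\Delta_K=(t^2-t+1)^2$ and $p=6$). The correct argument needs $p$ a prime power and the standard $\zz[t,t^{-1}]$-torsion/Smith-theory computation for covers of the disk exterior. Second, the entire analytic content of the theorem --- the well-definedness of $\tau$ via Novikov additivity and the $G$-signature theorem, and the twisted half-lives-half-dies argument showing that the boundary Lagrangian lifts to an interior one over $\qq(\zeta_{q^r})$, with $\zz[\frac1q]$ absorbing the torsion discrepancies --- is exactly the part you leave as an assertion, and it is where the work in \cite{cg1} lies; as a proposal this is an outline of the known proof rather than a proof. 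Third, a bookkeeping point: as constructed in \cite{cg1} the invariant $\tau(K,\chi)$ is built from the zero-surgery manifold and carries an additional infinite-cyclic ($t$-variable) cover --- this is why its discriminant is a twisted Alexander \emph{polynomial} in Section 5 --- so the bounding object in the vanishing argument is a cover of the slice-disk exterior rather than the branched cover $W_p$ alone; $W_p$ by itself only computes the signature-type invariants, and the extra cover is responsible for some of the $\zz[\frac1q]$ and $(1-t)$ corrections you would need to track.
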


\noindent{\bf Comments.}  A {\it metabolizer} $M \subset H_1(M_p)$ is a subgroup for which 
$|M|^2 = |H_1(M_p)|$ and on which the linking form of $H_1(M_p)$ is identically $0$.  \vskip.1in

\noindent{\bf Addendum.} In the case that $q$ is odd or that $K$ is ribbon, it follows from the work of~\cite{cg1} that one can let $r = 1$ in the statement of Theorem~\ref{cgthm}. In our case we need to work with $q = 2$.  This introduces algebraic difficulties that have not appeared in past work using Casson-Gordon theory.  These difficulties seem unavoidable in working with $10_{82}$, and necessitate a Galois theoryargument in the next section.

\subsection{Twisted Alexander polynomials}

We now summarize the results of~\cite{hkl, kl1}.  Given the pair $(M_p, \chi)$, one lets $\overline{M}_p$ be the $3$--manifold that is the $p$--fold cyclic cover of 0--surgery on $K$.  There is an induced character $\overline{\chi} \co H_1(\overline{M}_p) \to \zz/p\zz \oplus \zz$.  One then has that $\qq(\zeta_q)[t,t^{-1}]$ is a module over the group ring $\zz[\pi_1(\overline{M}_p)]$ and we can consider the twisted homology group $H_1( \overline{M}_p, \zz[\pi_1(\overline{M}_p])$ as a  $\qq(\zeta_q)[t,t^{-1}]$--module. This will be a torsion module, and we  have the following.

\begin{definition} With the notation above, $\Delta_{K,\chi}(t) \in   \qq(\zeta_q)[t,t^{-1}]$ is the order of the module $H_1( \overline{M}_p, \zz[\pi_1(\overline{M}_p)])$.  It is well-defined up to multiplication by units, that is, by elements of the form $at^i$.
\end{definition}

In~\cite{kl1} it is proved that for $q$ odd, $\Delta_{K,\chi}(t)$ is, roughly, the discriminant of the Casson-Gordon invariant.  From that one can conclude that if $K$ is slice, then for appropriate $\chi$, $\Delta_{K,\chi}(t)$ will factor as $f(t) \overline{f(t^{-1})}$, $f \in   \qq(\zeta_q)[t,t^{-1}]$,    (with perhaps an additional factor of $(1-t)$ appearing).  However, in~\cite{kl1} a direct proof of this factoring condition is given, and that proof does not make use of the condition that $q$ is odd in attaining a factoring condition.  However, as in the original work of~\cite{cg1}, if $q=2$ then the factoring can only be shown to be over the field $\qq(\zeta_{q^r})$ for some $r$.  In summary we have:

\begin{theorem} \label{cg1thm} If $K$ is slice, there exists a metabolizer $M$ for the linking form of $H_1(M_p)$ such that for all $\chi$ that vanish on $M$, $\Delta(K,\chi) =  at^i f(t) \overline{f(t^{-1})}(1-t)^s$ for some $a \in \qq(\zeta_{q^r}), i \in \zz$ and $f \in   \qq(\zeta_{q^r})[t,t^{-1}]$, where $r $ is some positive integer.  For $\chi$ nontrivial, $s = 1$, and for $\chi$ trivial, $s = 0$.

\end{theorem}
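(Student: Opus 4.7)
The plan is to adapt the direct proof given in \cite{kl1} for odd $q$ to the case of arbitrary $q$, keeping careful track of the field extension required when $q=2$.

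First I would recall the construction of the metabolizer. If $K$ bounds a slice disk $D \subset B^4$ and $W = B^4 \setminus \nu(D)$, then $H_1(W) \cong \zz$ and the induced $p$-fold cyclic cover $\tilde W$ has $\partial \tilde W = M_p$. A standard half-lives-half-dies argument (using $H_*(W;\qq) = H_*(S^1;\qq)$ together with the transfer for the finite cover) shows that $M := \ker\bigl(H_1(M_p) \to H_1(\tilde W)\bigr)$ satisfies $|M|^2 = |H_1(M_p)|$ and annihilates the linking form, so it is a metabolizer.

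The central step is to produce, for each $\chi\co H_1(M_p) \to \zz/q\zz$ that vanishes on $M$, an extension to a character $\tilde\chi\co H_1(\tilde W) \to \zz/q^r\zz$ for some $r\ge 1$. For $q$ odd, the Bockstein sequence and the splitting of $\zz/q\zz$-coefficient cohomology from integral cohomology allow one to take $r=1$, which is the case treated in \cite{kl1}. For $q=2$, the obstruction to lifting sits in a $\zz/2\zz$-cohomology group that may be killed only after composition with a reduction $\zz/2^r\zz \to \zz/2\zz$ for sufficiently large $r$; producing this $r$ is the main new algebraic point and is precisely what is meant in \cite{cg1} by passing to $\qq(\zeta_{q^r})$.

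Once $\tilde\chi$ is in hand, I would follow \cite{kl1} essentially verbatim. Combining $\tilde\chi$ with the projection $H_1(\tilde W) \to \zz$ produces a representation into $\zz/q^r\zz \oplus \zz$ extending $\overline\chi$ over the $0$-surgery cover. The twisted chain complex $C_*(\tilde W; \qq(\zeta_{q^r})[t,t^{-1}])$ is then a complex of finitely generated modules over the PID $\qq(\zeta_{q^r})[t,t^{-1}]$, and Poincar\'e-Lefschetz duality with twisted coefficients, together with the long exact sequence of the pair $(\tilde W, \overline M_p)$, exhibits the image of $H_1(\overline M_p;\,\cdot\,)$ in $H_1(\tilde W;\,\cdot\,)$ as a Lagrangian half with respect to the twisted Blanchfield pairing. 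This forces the order $\Delta_{K,\chi}(t)$ of $H_1(\overline M_p;\,\cdot\,)$ to factor as $a\,t^i\,f(t)\,\overline{f(t^{-1})}$ up to units in $\qq(\zeta_{q^r})[t,t^{-1}]$. The $(1-t)^s$ correction is an $H_0$ bookkeeping issue: when $\chi$ is trivial, the $\zz$-cover piece is already accounted for in the duality and $s=0$; when $\chi$ is nontrivial, a single extra $(1-t)$ arises from comparing $H_0$'s on the two sides of the duality sequence, giving $s=1$.

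The main obstacle is the extension step: pinning down the value of $r$ for which $\chi$ lifts to $\zz/q^r\zz$ on $H_1(\tilde W)$, and checking that the resulting Lagrangian/duality statement survives the base change from $\qq(\zeta_q)$ to $\qq(\zeta_{q^r})$. All other ingredients are standard in the Casson-Gordon and twisted-polynomial literature, so the novelty is concentrated entirely in this cohomological lifting.
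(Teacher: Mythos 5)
Your proposal is correct and follows essentially the same route as the paper, which establishes this theorem by invoking the direct factorization argument of \cite{kl1} (which does not use $q$ odd) together with the Casson--Gordon character-extension step from \cite{cg1} that forces the coefficient field to be enlarged to $\qq(\zeta_{q^r})$ when $q=2$; your sketch simply fills in the outline of that cited argument (metabolizer from the slice-disk exterior, extension of $\chi$ to a $\zz/q^r\zz$-valued character on the cover, twisted duality giving the $f(t)\overline{f(t^{-1})}$ factorization, and the $(1-t)^s$ bookkeeping). The only inessential quibble is your stated mechanism for taking $r=1$ when $q$ is odd (a Bockstein/splitting argument), which is not how \cite{cg1} obtains it and is in any case not needed here, since the statement allows arbitrary $r$.
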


\subsection{Homology of Covers} 
To apply Theorem~\ref{cg1thm} we need to understand the metabolizers of the relevant branched covers of the knots of interest.

Suppose that $10_{82}$ is concordant to a knot $J$ with $g_c(J) \le 3$.  Then by the Fox-Milnor theorem we have that for some integer $a$, 

$$\Delta_J(t) = (t^4-2t^3+t^2-2t +1)(at-(a-1))((a-1)t-a).$$ 

 \begin{theorem}  $ \ $ 
 \begin{enumerate}
  \item The homology of the 3--fold branched cover of $S^3$ over $10_{82}$  is given by  $H_1(M_3(10_{82})) = \zz / 8 \zz\ \oplus\ \zz /8 \zz$.  
  \item The homology of the 3--fold branched cover of $S^3$ over $J$ satisfies  $H_1(M_3(J)) = \zz / 2 \zz\ \oplus\ \zz / 2 \zz\ \oplus\ T$, where the order of $T$ is odd.  
  \item For each metabolizer   $M \subset H_1(M_3(10_{84}   \#  -J)) $ there is a nontrivial character  $\chi \co  H_1(M_3(10_{84}   \#   -J)) \to \zz / 2 \zz$ which vanishes on $M$ and also vanishes on $ H_1(M_3(  -J))$.
\end{enumerate}
 \end{theorem}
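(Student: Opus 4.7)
My plan is to treat the three parts separately, exploiting throughout that $H_1(M_3(K))$ is naturally a module over $\zz[\omega]$ for $\omega = e^{2\pi i/3}$, presented (up to units) by $\omega V - V^t$ for any Seifert matrix $V$ of $K$.

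For (1), I first evaluate $\Delta_{10_{82}}(t)$ at $\omega$.  Using $\omega^3 = 1$ and $\omega^2 + \omega + 1 = 0$ one gets $\omega^2-\omega+1 = -2\omega$ and $\omega^4-2\omega^3+\omega^2-2\omega+1 = -2(\omega+1)$, so $\Delta_{10_{82}}(\omega) = 8\omega$ in $\zz[\omega]$, i.e.\ $8$ times a unit.  Combined with $|H_1(M_3(K))| = |\Delta_K(\omega)|^2$ this yields $|H_1(M_3(10_{82}))| = 64$.  Since $2$ is inert in $\zz[\omega]$, the invariant factors of $H_1$ as a $\zz[\omega]$-module are powers of $(2)$ whose exponents sum to $3$, leaving only three options: $\zz[\omega]/(8)$, $\zz[\omega]/(2)\oplus\zz[\omega]/(4)$, or $(\zz[\omega]/(2))^3$.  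I would compute the Smith normal form of the $8\times 8$ matrix $\omega V - V^t$ in Maple from an explicit Seifert matrix to single out the cyclic case $\zz[\omega]/(8) \cong \zz/8\zz \oplus \zz/8\zz$.

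For (2), the Fox--Milnor theorem forces $\Delta_J(t) = (t^4-2t^3+t^2-2t+1)(at-(a-1))((a-1)t-a)$ for some integer $a$.  A short calculation shows the norm in $\zz[\omega]$ of $a\omega-(a-1)$ equals $3a^2-3a+1 = 3a(a-1)+1$, which is always odd.  Hence $\Delta_J(\omega) = 2u$ up to units, with $u$ coprime to $2$ in $\zz[\omega]$, so the $2$-primary part of $H_1(M_3(J))$ is annihilated by $2$ and is a module over $\zz[\omega]/(2) \cong \ff_4$ of $\zz$-order $4$; the only such module is $\ff_4$ itself, which is $\zz/2\zz \oplus \zz/2\zz$ as an abelian group.

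For (3), write $H := H_1(M_3(10_{82} \# {-}J)) = A \oplus B$ with $A = (\zz/8\zz)^2$ and $B = (\zz/2\zz)^2 \oplus T$, $|T|$ odd.  A metabolizer $M$ satisfies $|M|^2 = |H|$, so its $2$-primary part $M_{(2)}$ has order $16$.  Since $|\pi_A(M_{(2)})| \le |M_{(2)}| = 16 < 64 = |A|$, the image $\pi_A(M_{(2)}) \subset A$ is a proper $\zz/8\zz$-submodule.  Applying Nakayama's lemma to $A$ as a finitely generated module over the local ring $\zz/8\zz$ (maximal ideal $(2)$) gives $\pi_A(M_{(2)}) + 2A \neq A$, so the image of $\pi_A(M_{(2)})$ in $A/2A \cong \ff_2^2$ is a proper subspace.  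Pick any nontrivial $\ff_2$-linear functional annihilating this image, lift to $\chi_A\co A \to \zz/2\zz$, and extend by zero on $B$.  The resulting $\chi\co H \to \zz/2\zz$ is nontrivial, vanishes on $B$ by construction, and vanishes on $M$ because under the primary decomposition $M = M_{(2)} \oplus M_{\mathrm{odd}}$ the summand $M_{\mathrm{odd}}$ projects to $0$ in the $2$-primary $A$, so $\chi(M) = \chi_A(\pi_A(M_{(2)})) = 0$.

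The main obstacle is the Smith normal form computation in (1): while $|H_1| = 64$ drops out of the Alexander polynomial for free, isolating the cyclic $\zz[\omega]$-module structure among the three a priori possibilities requires explicit calculation with the presentation matrix rather than a polynomial-level argument.
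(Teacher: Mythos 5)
Your proposal is correct, but it takes a genuinely different route from the paper in parts (2) and (3). For (2) the paper computes the order via Fox's formula, obtaining $4(a^3-(a-1)^3)^2$, and then invokes Plans' theorem that $H_1(M_3)$ of a knot is of the form $T_1\oplus T_1$ to exclude a $\zz/4\zz$ summand; you instead use the $\zz[\omega]$--module structure (since $2$ is inert, the $2$--primary part is a module over $\zz[\omega]/(2)\cong \ff_4$ of order $4$, hence $(\zz/2\zz)^2$). Both are sound, and your norm $N(a\omega-(a-1))=3a^2-3a+1$ is exactly the paper's $a^3-(a-1)^3$. For (3) the paper argues in coordinates, putting a generating set of the $2$--torsion $M_2$ of the metabolizer in echelon form by Gauss--Jordan and running a short case analysis on parities of leading entries; your argument --- $|M_{(2)}|=16<64=|A|$, so by Nakayama the image of $M_{(2)}$ in $A/2A\cong \ff_2^{\,2}$ is a proper subspace, and any nonzero functional annihilating it lifts to a nontrivial character vanishing on $M$ and on $H_1(M_3(-J))$ --- is cleaner, avoids the case analysis, and shows in passing that (3) only needs the order of the $2$--part of $H_1(M_3(10_{82}))$, which already follows from $\Delta_{10_{82}}(\omega)=8\omega$, not the precise structure $(\zz/8\zz)^2$. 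For (1) you and the paper are in the same position: the paper calls it ``a standard calculation'' with a reference, while you reduce it to the Smith normal form of $\omega V-V^t$ over the Euclidean domain $\zz[\omega]$ (a valid presentation of $H_1(M_3)$) and defer the finite computation to Maple; that deferral is no worse than the paper's, though to prove (1) as stated the computation must actually be run, since the evaluation $|\Delta_{10_{82}}(\omega)|^2=64$ alone leaves the three module structures you list.
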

 
 \begin{proof} $ \ $
 \begin{enumerate}
 \item This is a standard calculation in knot theory; see for instance~\cite{ro}.  See also the proof of (2) next.
 
 \item  A theorem of Fox~\cite{fo1}  states that the order of the homology of the 3--fold branched cover of a knot $K$ is given by $|\Delta(\zeta_3)\Delta(\overline{\zeta_3})|.$  From this a direct calculation based on our given form of $\Delta_J(t)$ yields $|H_1(M_3(J)) | = 4(a^3 - (a-1)^3)^2$.  The theorem of Plans~\cite{pl} (see also~\cite{ro}) states that for odd prime powers $p$, $H_1(M_p(K))$ is always of the form $T_1 \ \oplus \ T_1$ for some torsion group $T_1$.  Since $a^3 - (a-1)^3 $ is odd, the result follows.

 \item The 2--torsion in $H_1(M_3(10_{84} \ \# \ -J)) $ is $H_2 \cong \zz/ 8\zz\ \oplus\ \zz/ 8\zz\ \oplus\ \zz/ 2\zz\ \oplus\ \zz/ 2\zz$.  We let $M_2$ be the 2--torsion in $M$:  $| M_2| = 16$.   For notation, we use the coordinates given by the direct sum decomposition of $H_2$; abbreviate $v_1 = (1,0,0,0)$, $v_2 = (0,1,0,0)$, $v_3 = (0,0,1,0)$, and $v_4 = (0,0,0,1)$.  A generating set for $M_2$ can be simplified using a Gauss-Jordan procedure to be of the form 
 $$\{   (a_1,b_1,c_1,d_1),   (0, b_2,c_2,d_2),   (0,0,c_3,d_3), (0,0,0,d_4)\}.$$

 If $a_1$ is even, then the character $\chi\co H_2 \to \zz/ 2\zz$ that takes value 1 on   $v_1$  and 0 on all other $v_i$ vanishes on $M_2$ and  $ H_1(M_3(  -J))$, as desired.
 
 If $a_1$ is odd, then $b_2$ must be even, or else the first two generators, $ (a_1,b_1,c_1,d_1)$ and $  (0, b_2,c_2,d_2)$,  would generate a
 subgroup of order 64.  If $b_1$ is also even, then we can let $\chi$ be the character that takes value 1 on $v_2$ and 0 on all other $v_i$.  If $b_1$ is odd, we can let $\chi$ be the character that takes value 1 on   $v_1$ and $v_2$, and takes value 0 on $v_3$ and $v_4$.  In either case, $\chi$ will have the desired properties.
  
 \end{enumerate}
 \end{proof}
 We now wish to compute the twisted polynomial for the $\chi$ given in the previous theorem.  We state the outcome as the following lemma.
 
 \begin{lemma} \label{charlem}
  For the character $\chi \co H_1(M_3(10_{84} \ \# \ -J))\to \zz / 2\zz$ given  above,  $ \Delta_{10_{82}  \#   J, \chi}(t) = (t^4 - 8t^3 +10t^2 - 8t +1)g(t)g(t^{-1})$ for some $g(t)$.
 
\end{lemma}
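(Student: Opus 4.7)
The plan is to exploit the multiplicativity of the twisted Alexander polynomial under connected sum, together with the special property of $\chi$ guaranteed by the previous theorem.

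Since $\chi$ vanishes on $H_1(M_3(-J))$, its restriction to the $-J$ summand is the trivial character, while its restriction $\chi_1$ to $H_1(M_3(10_{82}))$ is a nontrivial element of $\Hom(\zz/8\zz \oplus \zz/8\zz, \zz/2\zz)$. From the connected sum decomposition of the cyclic cover of the $0$--surgery on $10_{82}\ \#\ -J$, the twisted polynomial factors up to a unit as
$$\Delta_{10_{82}\ \#\ -J, \chi}(t) \doteq \Delta_{10_{82}, \chi_1}(t)\cdot\Delta_{-J, 0}(t).$$

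On the $-J$ side, the trivial character reduces the twisted polynomial to (a shift of) the classical Alexander polynomial of the $3$--fold cyclic cover of the $0$--surgery on $-J$, which is expressible in terms of $\Delta_J(t)$ evaluated at the Galois conjugates $\zeta_3 t$ and $\zeta_3^2 t$. Using the assumed form
$$\Delta_J(t) = (t^4-2t^3+t^2-2t+1)(at-(a-1))((a-1)t-a),$$
one checks that the linear pair $(at-(a-1))((a-1)t-a)$ is already of the Fox--Milnor shape $f(t)\overline{f(t^{-1})}$, while the real reciprocal quartic $t^4-2t^3+t^2-2t+1$ becomes its own complex conjugate under $\zeta_3 \leftrightarrow \zeta_3^2$. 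After grouping the resulting reciprocal root pairs, the $-J$ contribution has the form $h(t)h(t^{-1})$.

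The remaining task, which is the computational core of the lemma, is to show that $\Delta_{10_{82}, \chi_1}(t)$ equals $(t^4 - 8t^3 + 10t^2 - 8t + 1)$ times a factor of the form $g_1(t)g_1(t^{-1})$. I would attack this by Fox calculus on a Wirtinger presentation of $\pi_1(S^3 \setminus 10_{82})$, passing to the $3$--fold cyclic cover and twisting by $\chi_1$; the values of $\chi_1$ on meridians of the cover are determined by its values on $H_1(M_3(10_{82}))$, which are read off from a Seifert-matrix presentation. Because the deck transformation group $\zz/3\zz$ acts transitively on the three nontrivial $\zz/2\zz$--characters of $\zz/8\zz\oplus\zz/8\zz$, the resulting polynomial is independent of the specific $\chi_1$, so any one representative suffices.

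The main obstacle is this explicit twisted Alexander polynomial computation; as with the calculations in Section~\ref{exampsection}, it amounts to a finite but intricate matrix computation over $\qq[t,t^{-1}]$ and is best carried out with Maple. Once the quartic $t^4 - 8t^3 + 10t^2 - 8t + 1$ has been identified in the output, the complementary factor is verified directly to be of the form $g_1(t)g_1(t^{-1})$; combined with the symmetric contribution from $-J$, this yields the factorization claimed in the lemma.
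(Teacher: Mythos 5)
Your setup (multiplicativity of the twisted polynomial, $\chi_2$ trivial on the $-J$ side, $\chi_1$ nontrivial on the $10_{82}$ side) matches the intended argument, but you have attributed the two kinds of factors to the wrong summands, and this is fatal. On the $-J$ side the trivial character gives the norm $\prod_{i=0}^{2}\Delta_J(\zeta_3^i x)\big|_{x^3=t}$, and applying this to the factor $t^4-2t^3+t^2-2t+1$ (which $\Delta_J$ must contain by Fox--Milnor) produces exactly $t^4-8t^3+10t^2-8t+1$: the roots get cubed, sending $t+t^{-1}=1\pm\sqrt2$ to $t^3+t^{-3}=4\pm2\sqrt2$. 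Your claim that this contribution ``has the form $h(t)h(t^{-1})$'' conflates reciprocity (symmetry under $t\mapsto t^{-1}$) and the complex conjugacy of the factors $\Delta_J(\zeta_3 x)$, $\Delta_J(\zeta_3^2 x)$ with the Fox--Milnor shape $h(t)\overline{h(t^{-1})}$; the norm of a symmetric polynomial is again symmetric, but symmetric is not Fox--Milnor, and indeed the entire point of Theorem~\ref{slicethm} is that $t^4-8t^3+10t^2-8t+1$ does \emph{not} factor as $g(t)g(t^{-1})$ over any $\qq(\zeta_{2^r})$ (its splitting field has dihedral Galois group). If your claim about the $-J$ side were correct, the lemma would carry no obstruction at all and the concluding contradiction would evaporate.

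Correspondingly, the step you call the computational core --- showing that $\Delta_{10_{82},\chi_1}(t)$ equals $(t^4-8t^3+10t^2-8t+1)$ times a Fox--Milnor factor --- would fail if carried out: the computation, following~\cite{hkl}, gives $\Delta_{10_{82},\chi_1}(t)=(t^2+2t-1)(t^2-2t-1)(t-1)^2$, which is of the form $at^k h(t)h(t^{-1})$ and is therefore absorbed into $g(t)g(t^{-1})$. In the actual proof it is the unknown knot $J$, through the norm of the Alexander polynomial factor forced on it by Fox--Milnor, that supplies the distinguished quartic; the twisted polynomial of $10_{82}$ itself contributes only Fox--Milnor factors. (Your observation that the linear pair $(at-(a-1))((a-1)t-a)$ stays of Fox--Milnor shape after taking the norm is correct, and the deck-transformation symmetry making the answer independent of the choice of nontrivial $\chi_1$ is a harmless side remark, but neither repairs the misassignment above.)
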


 \begin{proof}
 
  By   multiplicativity, the  twisted polynomial  is given by
 $\Delta_{10_{82}, \chi_1}(t) \Delta_{J, \chi_2}(t),  $
   where $\chi_1$ and $\chi_2$ are the two restrictions. Notice that $\chi_2$ is trivial.

Since $J$ is unknown, computing its general twisted polynomials would be impossible. In the present case however,  $\chi_2$ is trivial and the twisted polynomial is determined by the Alexander polynomial of $J$.  The simplest formulation, given in~\cite{hkl}, is as follows.  For any polynomial $f$ and prime $p$  we can form the product, 
  $$N_p(f)(t) =  \prod_{i=0}^{p-1}   \Delta_J({\zeta_p}^i x)|_{x^p= t}.$$
  (The product on the right will be a polynomial in $x^p$, so the substitution $x^p = t$ does yield a polynomial in $t$.)  Then, according to~\cite{hkl}, for the trivial $\chi$, $\Delta_{K, \chi}(t) = N_p(\Delta_K(t))$, where $p$ is the  degree of the covering space; that is,  if we are working with $H_1(M_p)$.
  
Since we are interested in whether the twisted polynomial factors as $f(t)\overline{f(t^{-1})}$ we can ignore the pair of factors of $\Delta_J(t)$ that are already of this form and compute 
$N_2(t^4-2t^3+t^2-2t+1)$.  The calculation yields:
$$\Delta_{J, \chi_2}(t) = (t^4 - 8t^3 +10t^2 - 8t +1) g(t)g(t^{-1}).$$
(Notice that since we are working with a character to $\zz_2$, the polynomial will be real; the primitive square root of 1 is $-1$.)

For $10_{82}$ a calculation based on the results of~\cite{hkl} yields $$\Delta_{10_{82}, \chi_1}(t) = (t^2 + 2t-1)(t^2-2t-1) (t-1)^2.$$
Note, this is of the form  $a t^k h(t)h(t^{-1})$.  Thus, the statement of the lemma follows.

 \end{proof}
 
    As a consequence, we can now complete the proof of Theorem~\ref{slicethm}.
    
    \begin{proof}[Proof, Theorem~\ref{slicethm}]  
      If  $10_{82}\ \# \ J$ 
   is slice for some $J$ with $g_3(J) \le 3$, then by Lemma~\ref{charlem} the polynomial $\Delta(t) = t^4 - 8t^3 +10t^2 - 8t +1$ would
   factor as $t^2 g(t)g(t^{-1})$ in $\qq(\zeta_{2^r})[t]$ for some $r$. 
In fact, over $\qq(\zeta_{8})$, $\Delta(t)$ does factor into two irreducible symmetric factors:
$$  \Delta(t)  = (t^2 + (2\zeta_{8} -2\zeta_{8}^3 -4)t +1)(t^2  +   (-2\zeta_{8} +2\zeta_{8}^3 -4)t +1).
$$
These factors however are not complex conjugates; the coefficients are all real.

Thus, if $\Delta(t)$ does factor as $t^2 g(t)g(t^{-1})$ for some $g(t) \in   \qq(\zeta_{2^r})$ and $r\ge 1$, then  $\Delta(t)$ would factor into linear factors, so that $g(t)$ would be the product of one linear factor of each of the two irreducible quadratic factors of $\Delta(t)$ in  $\qq(\zeta_{8})[t]$.

In particular, we would have that the splitting field $\ff$ for $\Delta(t)$ would be a subfield of  $\qq(\zeta_{2^r})$.  The Galois group of the splitting field for $\Delta(t)$ is the nonabelian dihedral group with eight elements, as can be computed by Maple.  On the other hand, this Galois group  should be a quotient of the Galois group of the extension  $\qq(\zeta_{2^r})$, which is abelian.  This gives the desired contradiction.

      \end{proof}

 \section{Problems}
 
 \subsection{Smooth invariants}  The distinction between the smooth and topological locally flat category, with respect to the study of concordance genus, is  made clear by the following problem:  working in the smooth category, find a knot $K$ with $\Delta_K(t) = 1$, and for which $g_c(K) \ne g_4(K)$.  Although a host of tools are now available that distinguish smooth and topological concordance (for instance, based  on gauge theory~\cite{cog}, Heegaard-Floer homology~\cite{os}, and Khovanov homology~\cite{ra}), it is not clear that any of these can be applied to this problem.  
 
 \subsection{Topological obstructions}  The results of this paper, including those using Casson-Gordon invariants, apply in the topological category.  In~\cite{liv1} Casson-Gordon invariants were used to analyze the concordance genus for algebraically slice knots.  There should be examples for which Casson-Gordon methods fail, but for which the techniques of~\cite{cot} do apply.  
 
To make the issue concrete,  here is a specific problem:  For every $n \in \frac{1}{2}\zz$, find a knot $K$ with $g_4(K) = 1$ such that there exist knots $J$ with $g_3(J) = 1$, and $K \ \# \ -J \in \calf_{n}$, but for all such $J$, $K \ \# \ -J \notin \calf_{n+  .5}$.  Here $\{\calf_{n}\}$ represents to filtration of $\calc$ defined in~\cite{cot}.
 
 \subsection{Concordance relations and torsion} The work in this paper is closely related to the problem of finding, and obstructing, concordance relations between low-crossing number knots.  For instance, we have seen that $10_{82}\ \# \ 9_{42}$ is not slice, but is algebraically slice.  It remains possible that this knot represents torsion in the concordance group; that is,   $k 10_{82} + k  9_{42} = 0 \in \calc$ for some $k$.  Many new methods have been applied to obstruct torsion in $\calc$, and these have   resolved many of the basic examples taken from the table of low-crossing number knots.  See, for example~\cite{grs, jn,  lis, ln1}.  New test cases can be found by examining such algebraic concordance relations.
 
 \subsection{(Sub)multiplicative properties of $g_4$ and $g_c$.}  It is clear that for all $K$ and $n\ge 0$,  $g_4(nK) \le ng_4(K)$ and $g_c(nK) \le ng_c(K)$.  Knots that represent torsion in $\calc$ can be used to build a variety of  examples demonstrating  that the inequality can be strict.  For instance,  $g_4(n (  3_1 \ \# \ 4_1)) = g_c(n (  3_1 \ \# \ 4_1) )= n + \epsilon$, where $\epsilon = 0$ if $n$ is even and $\epsilon = 1$ if $n$ is odd.
 
Interesting results can be observed by considering, for a fixed knot $K$, the quotient $g_4(nK)/n$ for $n$ large.  For knots that represent torsion in $\calc$ there is a limiting value: $\lim_{n \to \infty}   g_4(nK)/n = 0$.  If the 4--genus of a knot is determined by it classical signature  (that is, $\sigma(K) = 2g_4(K)$) then  again there is a limiting value: we have $\lim_{n \to \infty}   g_4(nK)/n = g_4(K)$.  This applies for the trefoil knot.  

There are more interesting examples.  For instance,  for the knot $K = 8_1$ one has that $g_4(K) = g_4(2K) =1$ and $\limsup_n  g_4(nK)/n \le 1/2$.  In fact a limit exists, but it is unknown whether for this knot, or any knot, the limiting value can be a non-integer.

The same questions can be asked regarding $g_c$, but here there are few tools to employ beyond basic signature and Alexander polynomial  methods.  For instance, for the knots  $8_{18}, 9_{40}$ and $10_{82}$, the methods of this paper do not distinguish the limiting behavior of $g_c$ and $g_4$.  
    


\end{document}